\newtheorem{theorem}{Theorem}
\newtheorem{lemma}[theorem]{Lemma}
\newtheorem{corollary}[theorem]{Corollary}
\newtheorem{proposition}[theorem]{Proposition}
\newtheorem{problem}[theorem]{Problem}
\def\inst#1{$^{#1}$}
\begin{document}

\title{On ordered Ramsey numbers of tripartite 3-uniform hypergraphs \footnote{An extended abstract version of this paper appeared in the Proceedings of Eurocomb 2021, see~\cite{balko2021ordered}}}

\author{Martin Balko\inst{1} \thanks{The first author was supported by the grant no.~18-13685Y of the Czech Science Foundation (GA\v{C}R) and by the Center for Foundations of Modern Computer Science (Charles University project UNCE/SCI/004).
This article is part of a project that has received funding from the European Research Council (ERC) under the European Union's Horizon 2020 research and innovation programme (grant agreement No 810115).} 
\and
M\'{a}t\'{e} Vizer\inst{2} \inst{3} \thanks{The second author was supported by the Hungarian National Research, Development and Innovation Office -- NKFIH under the grant SNN 129364, KH 130371 and FK 132060, by the J\'anos Bolyai Research Fellowship of the Hungarian Academy of Sciences and by the New National Excellence Program under the grant number \'UNKP-21-5-BME-361.} 
}

\maketitle

\begin{center}
{\footnotesize
\inst{1} 
Department of Applied Mathematics, \\
Faculty of Mathematics and Physics, Charles University, Czech Republic \\
\texttt{balko@kam.mff.cuni.cz}
\\\ \\
\inst{2} 
Alfr\'{e}d R\'{e}nyi Institute of Mathematics,\\ ELKH, Budapest, Hungary\\
\inst{3}
Department of Computer Science and Information Theory, \\ Budapest University of Technology and Economic \\
\texttt{vizermate@gmail.com}
}
\end{center}

\begin{abstract}
For an integer $k \geq 2$, an \emph{ordered $k$-uniform hypergraph} $\mathcal{H}=(H,<)$ is a $k$-uniform hypergraph $H$ together with a fixed linear ordering $<$ of its vertex set.
The \emph{ordered Ramsey number} $\overline{R}(\mathcal{H},\mathcal{G})$ of two ordered $k$-uniform hypergraphs $\mathcal{H}$ and $\mathcal{G}$ is the smallest $N \in \mathbb{N}$ such that every red-blue coloring of the hyperedges of the ordered complete $k$-uniform hypergraph $\mathcal{K}^{(k)}_N$ on $N$ vertices contains a blue copy of $\mathcal{H}$ or a red copy of $\mathcal{G}$.

The ordered Ramsey numbers are quite extensively studied for ordered graphs, but little is known about ordered hypergraphs of higher uniformity.
We provide some of the first nontrivial estimates on ordered Ramsey numbers of ordered 3-uniform hypergraphs.
In particular, we prove that for all $d,n \in \mathbb{N}$ and for every ordered $3$-uniform hypergraph $\mathcal{H}$ on $n$ vertices with maximum degree $d$ and with interval chromatic number $3$ there is an $\varepsilon=\varepsilon(d)>0$ such that $$\overline{R}(\mathcal{H},\mathcal{H}) \leq 2^{O(n^{2-\varepsilon})}.$$ 
In fact, we prove this upper bound for the number $\overline{R}(\mathcal{G},\mathcal{K}^{(3)}_3(n))$, where $\mathcal{G}$ is an ordered 3-uniform hypergraph with $n$ vertices and maximum degree $d$ and $\mathcal{K}^{(3)}_3(n)$ is the ordered complete tripartite hypergraph with consecutive color classes of size $n$.
We show that this bound is not far from the truth by proving $\overline{R}(\mathcal{H},\mathcal{K}^{(3)}_3(n)) \geq 2^{\Omega(n\log{n})}$ for some fixed ordered $3$-uniform hypergraph $\mathcal{H}$.
\end{abstract}

\section{Introduction}

For an integer $k \geq 2$ and a $k$-uniform hypergraph $H$, the \emph{Ramsey number} $R(H)$ is the minimum $N \in \mathbb{N}$ such that every 2-coloring of the hyperedges of the complete $k$-uniform hypergraph $K^{(k)}_N$ on~$N$ vertices contains a monochromatic subhypergraph isomorphic to $H$.
Estimating Ramsey numbers is a notoriously difficult problem.
Despite many efforts in the last 70 years, no tight bounds are known even for the complete graph $K_n$ on $n$ vertices.
Apart from some smaller term improvements, essentially the best known bounds are $2^{n/2} \leq R(K_n) \leq 2^{2n}$ by Erd\H{o}s~\cite{erdosSzekeres35} and by Erd\H{o}s and Szekeres~\cite{erdos47}.
The Ramsey numbers $R(K^{(k)}_n)$ are even less understood for $k \geq 3$.
For example, it is only known that 
\begin{equation}
\label{eq-hypergraphRamsey}
2^{\Omega(n^2)} \leq R(K^{(3)}_n) \leq 2^{2^{O(n)}},
\end{equation}
as shown by Erd\H{o}s, Hajnal, and Rado~\cite{erdosRado65}.
A famous conjecture of Erd\H{o}s, for whose proof Erd\H{o}s offered \$500 reward, states that there is a constant $c>0$ such that $R(K^{(3)}_n) \geq 2^{2^{cn}}$.

Recently, a variant of Ramsey numbers for hypergraphs with a fixed order on their vertex sets has been introduced~\cite{bckk13,clfs17}.
For an integer $k \geq 2$, an \emph{ordered $k$-uniform hypergraph} $\mathcal{H}$ is a pair $(H,<)$ consisting of a $k$-uniform hypergraph $H$ and a linear ordering $<$ of its vertex set.
An ordered $k$-uniform hypergraph $\mathcal{H}_1=(H_1,<_1)$ is an \emph{ordered subhypergraph} of another ordered $k$-uniform hypergraph $\mathcal{H}_2=(H_2,<_2)$, written $\mathcal{H}_1 \subseteq \mathcal{H}_2$, if $H_1$ is a subhypergraph of $H_2$ and $<_1$ is a suborder of $<_2$.
Two ordered hypergraphs $\mathcal{H}_1$ and $\mathcal{H}_2$ are \emph{isomorphic} if there is an isomorphism between their underlying hypergraphs that preserves the vertex orderings of $\mathcal{H}_1$ and $\mathcal{H}_2$.
Note that, up to isomorphism, there is a unique ordered complete $k$-uniform hypergraph $\mathcal{K}^{(k)}_n$ on $n$ vertices.

The \emph{ordered Ramsey number} $\overline{R}(\mathcal{H},\mathcal{G})$ of two ordered $k$-uniform hypergraphs $\mathcal{H}$ and $\mathcal{G}$ is the smallest $N \in \mathbb{N}$ such that every coloring of the hyperedges of $\mathcal{K}^{(k)}_N$ by colors red and blue contains a blue ordered subhypergraph isomorphic to $\mathcal{H}$ or a red ordered subhypergraph isomorphic to $\mathcal{G}$.
In the \emph{diagonal case} $\mathcal{H} = \mathcal{G}$, we just write $\overline{R}(\mathcal{G})$ instead of $\overline{R}(\mathcal{G},\mathcal{G})$.

The ordered Ramsey numbers are known to be finite and it is easy to see that they grow at least as fast as the standard Ramsey numbers.
Studying ordered Ramsey numbers has attracted a lot of attention lately (see the survey by Conlon, Fox, and Sudakov~\cite{cfsSurvey}), as there are various motivations coming from the field of discrete geometry.
It is known that ordered Ramsey numbers can behave quite differently than the standard Ramsey numbers, especially for sparse ordered graphs~\cite{bckk13,bjv16,clfs17}.
However, so far, the ordered Ramsey numbers have been studied mostly for ordered graphs only and very little is known about ordered Ramsey numbers of ordered $k$-uniform hypergraphs with $k \geq 3$.

In this paper, we focus on $3$-uniform hypergraphs and we prove some new bounds on the ordered Ramsey numbers of ordered tripartite $3$-uniform hypergraphs.
We also pose several new open problems in Section~\ref{sec-openProblems}.

\subsection{Preliminaries}

For an ordered $k$-uniform hypergraph $\mathcal{H}=(H,<)$ and two subsets $U$ and $V$ of vertices of~$\mathcal{H}$, we say that $U$ and $V$ are \emph{consecutive} if all vertices from $U$ precede all vertices of $V$ in~$<$.
An \emph{interval} in $\mathcal{H}$ is a subset $I$ of vertices of $\mathcal{H}$ such that for all vertices $u,v,w$ of $\mathcal{H}$ with $u<v<w$ and $u,w \in I$ we have $v \in I$.

For integers $k \geq 2$ and $\chi \geq k$, we use $K^{(k)}_\chi(n)$ to denote the \emph{complete $k$-uniform $\chi$-partite hypergraph}, that is, the vertex set of $K^{(k)}_\chi(n)$ is partitioned into $\chi$ sets of size $n$ and every $k$-tuple with at most one vertex in each of these parts forms a hyperedge.
The ordering of~$K^{(k)}_\chi(n)$, in which the color classes form consecutive intervals, is denoted by $\mathcal{K}^{(k)}_\chi(n)$.
We sometimes use $\mathcal{K}_{n,n}$ to denote $\mathcal{K}^{(2)}_2(n)$.

The \emph{degree} of a vertex $v$ in a hypergraph $H$ is the number of hyperedges of $H$ that contain~$v$.
For $d \in \mathbb{N}$, a $k$-uniform hypergraph $H$ is \emph{$d$-degenerate} if there is an ordering $v_1 \prec \cdots \prec v_t$ of vertices of $H$ such that each $v_i$ is contained in at most $d$ hyperedges of $H$ that contain a vertex from $v_1,\dots,v_{i-1}$.

For a positive integer $n$, we use $[n]$ to denote the set $\{1,\dots,n\}$.
We omit floor and ceiling sign whenever they are not crucial and we use $\log$ and $\ln$ to denote base $2$ logarithm and the natural logarithm, respectively.

\subsection{Previous results}
\label{subsec-previousResults}

The ordered Ramsey numbers of $k$-uniform ordered hypergraphs with $k \geq 3$ remain quite unexplored.
Only the ordered Ramsey numbers of so-called monotone hyperpaths are well understood due to their close connections to the famous Erd\H{o}s--Szekeres Theorem~\cite{erdosSzekeres35}; see~\cite{balko19,fpss12,moshShap14}.
A \emph{monotone hyperpath} $\mathcal{P}^{(k)}_n$ on $n$ vertices is an ordered $k$-uniform hypergraph where the hyperedges are formed by $k$-tuples of consecutive vertices.
Note that the maximum degree of a $k$-uniform monotone hyperpath is at most $k$.
Moshkovitz and Shapira~\cite{moshShap14} showed that $\overline{R}(\mathcal{P}^{(k)}_n) = {\rm tow}_{k-1}((2-o(1))n)$ for $k \geq 3$, where ${\rm tow}_h$ is the \emph{tower function} of height $h$ defined as ${\rm tow}_1(x)=x$ and ${\rm tow}_h(x) = 2^{{\rm tow}_{h-1}(x)}$ for $h \geq 2$.

Thus even for $3$-uniform hypergraphs $\mathcal{H}$ with bounded maximum degree the numbers $\overline{R}(\mathcal{H})$ can grow very fast.
We get an exponential lower bound on $\overline{R}(\mathcal{H})$ even for $3$-uniform ordered hypergraphs $\mathcal{H}$ with maximum degree $3$.
A similar result is known for ordered graphs, as for arbitrarily large values of $n$ there are ordered graphs $\mathcal{M}_n$ with $n$ vertices and maximum degree $1$ such that $\overline{R}(\mathcal{M}_n) \geq n^{\Omega(\log{n}/\log{\log{n}})}$~\cite{bckk13,clfs17}.
This superpolynomial growth rate is in sharp contrast with  the situation for unordered hypergraphs, where the Ramsey number $R(H)$ of every $k$-uniform hypergraph $H$ with bounded $k$ and with bounded maximum degree is at most linear in the number of vertices of~$H$~\cite{crst83,cfs09,cnko08,cnko09,ishingami07,nsrs08}.

Therefore, in order to obtain smaller upper bounds on the ordered Ramsey numbers, it is necessary to bound other parameter besides the maximum degree.
A natural choice is so-called interval chromatic number, which can be understood as an analogue of the chromatic number for ordered graphs due to a variant of the Erd\H{o}s--Stone--Simonovits theorem for ordered graphs proved by Pach and Tardos~\cite{pachTardos06}.
The \emph{interval chromatic number} 
$\chi_<(\mathcal{H})$ of an ordered $k$-uniform hypergraph $\mathcal{H}$ is the minimum number of intervals the vertex set of $\mathcal{H}$ can be partitioned into so that each hyperedge of $\mathcal{H}$ has at most one vertex in each of the intervals.

For ordered graphs, bounding both parameters indeed helps, as the ordered Ramsey number $\overline{R}(\mathcal{G})$ of every ordered graph $\mathcal{G}$ with bounded maximum degree $d$ and bounded interval chromatic number $\chi$ is at most polynomial in the number of vertices~\cite{bckk13,clfs17}.
Since $\mathcal{G} \subseteq \mathcal{K}^{(2)}_\chi (n)$, this result follows from the following stronger estimate proved by Conlon, Fox, Lee, and Sudakov~\cite{clfs17}: for all $d, \chi \in \mathbb{N}$, every $d$-degenerate ordered graph $\mathcal{G}$ on $n$ vertices with interval chromatic number $\chi$ satisfies
\begin{equation}
\label{eq-degreeIntChrNumber}   
\overline{R}(\mathcal{G},\mathcal{K}^{(2)}_\chi(n)) \leq n^{32 d \log{\chi}}.
\end{equation}

A natural question is whether we can also get some good upper bounds on ordered Ramsey numbers of similarly restricted classes of ordered hypergraphs.
If the interval chromatic number is bounded, then we can use a result of Conlon, Fox, and Sudakov~\cite{cfs11}, who showed that, for all positive integers $\chi \geq 3$ and $n$, \[R(K^{(3)}_\chi(n)) \leq 2^{2^{2R}n^2},\]
where $R = R(K_{\chi-1})$.
Since every ordering of $K^{(3)}_\chi(\chi n)$ contains an ordered subhypergraph isomorphic to $\mathcal{K}^{(3)}_\chi(n)$ and every ordered $3$-uniform hypergraph on $n$ vertices with interval chromatic number $\chi$ is an ordered subhypergraph of $\mathcal{K}^{(3)}_\chi(n)$, we obtain the following bound.

\begin{corollary}
\label{cor-3UnifBoundedIntChr}
For all positive integers $\chi \geq 3$ and $n$, every ordered $3$-uniform hypergraph $\mathcal{H}$ on $n$ vertices with interval chromatic number $\chi$ satisfies
\[\overline{R}(\mathcal{H}) \leq 2^{2^{2R}\chi^2n^2},\]
where $R = R(K_{\chi-1})$.
In particular, if the interval chromatic number $\chi$ of $\mathcal{H}$ is fixed, we have 
\[
\overline{R}(\mathcal{H}) \leq 2^{O(n^2)}.
\]
\end{corollary}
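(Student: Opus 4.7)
The plan is to combine two easy ordered-to-unordered reductions with the Conlon--Fox--Sudakov bound $R(K^{(3)}_\chi(m)) \leq 2^{2^{2R}m^2}$ cited just above the statement.

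First I would show that $\mathcal{H}$ embeds as an ordered subhypergraph of $\mathcal{K}^{(3)}_\chi(n)$. Since $\chi_<(\mathcal{H})=\chi$ and $|V(\mathcal{H})|=n$, the vertex set of $\mathcal{H}$ splits into $\chi$ consecutive intervals of size at most $n$, with every hyperedge meeting each interval in at most one vertex; padding each interval with isolated dummy vertices up to size exactly $n$ identifies $\mathcal{H}$ with an ordered subhypergraph of $\mathcal{K}^{(3)}_\chi(n)$, so $\overline{R}(\mathcal{H}) \leq \overline{R}(\mathcal{K}^{(3)}_\chi(n))$.

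Next I would prove the key embedding lemma: every linear order on $V(K^{(3)}_\chi(\chi n))$ contains $\mathcal{K}^{(3)}_\chi(n)$ as an ordered subhypergraph. Let $V_1,\dots,V_\chi$ be the parts (each of size $\chi n$) and, for each $j$, let $\pi_j$ be the position of the $n$-th vertex of $V_j$ in the order. Pick $c_1$ minimizing $\pi_{c_1}$ and take the first $n$ vertices of $V_{c_1}$ as the first consecutive interval $I_1$; since $\pi_j>\pi_{c_1}$ for $j\neq c_1$, at most $n-1$ vertices of any other part lie in the prefix up to $\pi_{c_1}$, so in the suffix each remaining part retains at least $(\chi-1)n$ of its vertices. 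Induction on $\chi$ then extracts the remaining consecutive intervals $I_2,\dots,I_\chi$ with distinct colors. This yields $\overline{R}(\mathcal{K}^{(3)}_\chi(n)) \leq R(K^{(3)}_\chi(\chi n))$: in any red/blue coloring of $\mathcal{K}^{(3)}_N$ with $N=R(K^{(3)}_\chi(\chi n))$, the unordered Ramsey bound produces a monochromatic $K^{(3)}_\chi(\chi n)$, and the embedding lemma applied to its inherited order extracts a monochromatic ordered $\mathcal{K}^{(3)}_\chi(n)$.

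Finally, plugging $m=\chi n$ into the Conlon--Fox--Sudakov bound gives $R(K^{(3)}_\chi(\chi n)) \leq 2^{2^{2R}\chi^2 n^2}$, which is $2^{O(n^2)}$ for fixed $\chi$ (since $R=R(K_{\chi-1})$ is then a constant). The only nontrivial ingredient is the ordered embedding lemma, but even this is essentially forced: choosing the part with the earliest-completed $n$-th vertex leaves every other part with at most $n-1$ ``used'' vertices in the prefix, so the induction closes cleanly and I expect no real obstacle.
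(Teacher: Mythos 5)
Your proposal is correct and follows exactly the paper's route: embed $\mathcal{H}$ into $\mathcal{K}^{(3)}_\chi(n)$, observe that every ordering of $K^{(3)}_\chi(\chi n)$ contains an ordered $\mathcal{K}^{(3)}_\chi(n)$, and apply the Conlon--Fox--Sudakov bound with $m=\chi n$. The only difference is that you spell out a (correct) proof of the ordered embedding step, which the paper states without proof.
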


Note that the last bound is asymptotically tight for dense ordered hypergraphs, as a standard probabilistic argument shows that $\overline{R}(\mathcal{H}) \geq 2^{\Omega(n^2)}$ for every ordered $3$-uniform hypergraph $\mathcal{H}$ on $n$ vertices with $\Omega(n^3)$ hyperedges.
In particular, we get $\overline{R}(\mathcal{K}^{(3)}_3(n)) \geq 2^{\Omega(n^2)}$.

\section{Our results}

Since the bounds on the ordered Ramsey numbers from Corollary~\ref{cor-3UnifBoundedIntChr} are asymptotically tight for dense ordered hypergraphs with bounded interval chromatic number, we consider the sparse case with bounded maximum degree and interval chromatic number. 
The situation for ordered hypergraphs seems to be more difficult than for ordered graphs, so we focus on the first nontrivial case, which is for ordered $3$-uniform hypergraphs with interval chromatic number $3$.

Assuming the maximum degree of an ordered hypergraph $\mathcal{H}$ with $\chi_<(\mathcal{H})=3$ is sufficiently small, we obtain a better upper bound on $\overline{R}(\mathcal{H})$ than the estimate $2^{O(n^2)}$ we would get from Corollary~\ref{cor-3UnifBoundedIntChr}.
We can prove an estimate with  a subquadratic exponent even in the more general setting $\overline{R}(\mathcal{H},\mathcal{K}^{(3)}_3(n))$, where, additionally, the interval chromatic number of $\mathcal{H}$ is arbitrary.

\begin{theorem}
\label{thm-3UnifMaxDegIntChr}
Let $\mathcal{H}$ be an ordered 3-uniform hypergraph on $t$ vertices with maximum degree $d$ and let $s$ be a positive integer.
Then there are constants $C=C(d)$ and $c>0$ such that
\[\overline{R}(\mathcal{H},\mathcal{K}^{(3)}_3(s)) \leq t \cdot 2^{C(s^{2-1/(1+cd^2)})}.\]
In particular, for $s=t=n$ and bounded $d$, we get the estimate
\begin{equation}
\label{eq-diagonal}
\overline{R}(\mathcal{H},\mathcal{K}^{(3)}_3(n)) \leq 2^{O(n^{2-1/(1+cd^2)})}.
\end{equation}
\end{theorem}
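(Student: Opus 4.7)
The plan is to run a dependent-random-choice–style greedy embedding of $\mathcal{H}$ into the blue hypergraph on $N=tM$ vertices, where $M=2^{Cs^{2-1/(1+cd^{2})}}$. I would partition $[N]$ into $t$ consecutive intervals $I_1<\cdots<I_t$ of size $M$ and try to embed the $i$-th vertex $v_i$ of $\mathcal{H}$ into $I_i$. The main extremal input is the tripartite K\H{o}v\'ari--S\'os--Tur\'an bound of Erd\H{o}s for $3$-uniform hypergraphs: a tripartite $3$-graph on parts of size $M$ with no $K_3^{(3)}(s,s,s)$ has at most $O(M^{3-1/s^{2}})$ edges. So if the coloring has no red copy of $\mathcal{K}^{(3)}_3(s)$, then for every triple of intervals $I_i,I_j,I_k$ the blue tripartite density is at least $1-M^{-1/s^{2}}$, and this is the only global property of the blue hypergraph I will use.

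Following the Conlon--Fox--Lee--Sudakov template behind~(\ref{eq-degreeIntChrNumber}), I would process $v_1,\ldots,v_t$ in order while maintaining candidate sets $C_j\subseteq I_j$ for every not-yet-embedded $v_j$, initialized as $C_j=I_j$. At step $i$ I use a H\"older-type dependent-random-choice argument to pick $x_i\in C_i$ so that, first, every completed triple $\{x_a,x_b,x_i\}$ coming from an edge $\{v_a,v_b,v_i\}\in E(\mathcal{H})$ is blue, and second, the sets $C_j$ for $j>i$, further restricted to the candidates compatible with the new ``closed'' constraints from edges $\{v_a,v_i,v_j\}$ and the new ``open'' constraints from edges $\{v_i,v_j,v_k\}$, remain as large as possible. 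Since $v_j$ lies in at most $d$ hyperedges of $\mathcal{H}$, the set $C_j$ is restricted only $O(d)$ times during the whole run, and an $s$-th moment version of DRC combined with the density bound $1-M^{-1/s^{2}}$ makes each restriction shrink $|C_j|$ by a controlled power of $M$.

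A bookkeeping calculation along these lines should yield that the total shrinkage of each $C_j$ is at most $M^{1-1/(1+cd^{2})}$, so $|C_j|\ge M^{1/(1+cd^{2})}\ge 1$ for $C=C(d)$ large enough, and the embedding can be completed. The main technical obstacle is the DRC accounting itself: edges $\{v_i,v_j,v_k\}$ with $i<j<k$ couple two future candidate sets simultaneously, so when choosing $x_i$ one must select against a joint potential that tracks an $s$-th moment of each affected $C_j$ at once, and then balance the resulting $d$ coupled H\"older steps. Forcing the exponents to close is exactly what produces the $1+cd^{2}$ in the denominator, and I expect this delicate balancing between the $s^{2}$ coming from the Zarankiewicz exponent and the $d^{2}$ coming from the accumulated degree overhead to be where the bulk of the work lies.
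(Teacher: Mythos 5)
Your plan has a decisive quantitative gap at its very first step. The only global input you allow yourself is the tripartite K\H{o}v\'ari--S\'os--Tur\'an/Erd\H{o}s bound, which gives red density at most $O(M^{-1/s^{2}})$ across three intervals of size $M$. But at the target size $M=2^{Cs^{2-1/(1+cd^{2})}}$ this quantity equals $2^{-Cs^{-1/(1+cd^{2})}}=1-o(1)$: it does not even force the red density below $1/2$, and it becomes non-vacuous only once $M\geq 2^{\Omega(s^{2})}$, i.e.\ exactly at the quadratic exponent the theorem is meant to beat (that regime is already covered by Corollary~\ref{cor-3UnifBoundedIntChr}). So the ``blue density at least $1-M^{-1/s^{2}}$'' you propose to propagate is essentially contentless at your $N$, and the bookkeeping claim that each candidate set shrinks by at most $M^{1-1/(1+cd^{2})}$ has no source; no balancing of ``the $s^{2}$ from Zarankiewicz against the $d^{2}$ overhead'' can yield a subquadratic exponent from this input. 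In the paper the subquadratic exponent comes from a different red-structure tool, Lemma~\ref{lem-tripartite} of Conlon, Fox and Sudakov, which finds a red $K^{(3)}_3(s)$ among the triangles of three bipartite graphs under the condition $e^{2^{10}\delta^{-2}s^{3/2}}(1-4\eta)^{-4s^{2}}(\delta/16)^{4s}\leq m$; the only $s^{2}$-term there is weighted by $\log\frac{1}{1-4\rho}\approx 8\rho$, and optimizing $\rho=s^{-1/(2+60d^{2})}$ against the $\rho^{-30d^{2}}$ loss in the embedding is precisely what produces the exponent $s^{2-1/(2+60d^{2})}$.

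There is also a structural gap in the embedding half. Even if the blue hypergraph were guaranteed to be dense between all large vertex subsets, that hypothesis is too weak to run a greedy or DRC embedding of a bounded-degree $3$-uniform $\mathcal{H}$: hyperedges through already-embedded vertices impose constraints living on \emph{pairs}, so you must carry sparse bipartite link graphs between candidate sets, and at later steps you need blue density measured \emph{relative to the triangles of these sparse graphs}, which subset density does not control. (This is the classical failure of weak hypergraph quasirandomness: the $3$-graph of cyclic triangles of a random tournament has density about $1/4$ on all large vertex subsets yet contains no $K^{(3)}_4$, which already has maximum degree $3$.) You flag the coupling of ``open'' constraints as the hard part, but resolving it is exactly what the paper's tri-$(\varepsilon,\rho,m)$-density and the bi-dense graphs $G^{i}_{j,k}$ in Lemma~\ref{lem-embedding} are for; and when tri-density fails, the witness is a sparse triangle structure rather than a dense tripartite block, so the red $\mathcal{K}^{(3)}_3(s)$ must be extracted via Lemma~\ref{lem-tripartite}, not via a Zarankiewicz count. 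In short, the missing relative-density mechanism is not a bookkeeping detail but the heart of the proof, and the extremal lemma you chose cannot replace it.
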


The main idea of the proof of Theorem~\ref{thm-3UnifMaxDegIntChr} is based on an embedding lemma from~\cite{cfs12}, where the authors study Erd\H{o}s--Hajnal-type theorems for $3$-uniform tripartite hypergraphs.
We prove a variant of this lemma, which works for ordered hypergraphs, does not consider induced copies, and uses the assumption that the maximum degree of $\mathcal{H}$ is bounded instead of assuming that the number of vertices of $\mathcal{H}$ is fixed.

Since every ordered $3$-uniform hypergraph $\mathcal{H}$ on $n$ vertices with $\chi_<(\mathcal{H})=3$ is an ordered subhypergraph of~$\mathcal{K}^{(3)}_3(n)$, we immediately obtain the following corollary.

\begin{corollary}
\label{cor-3UnifMaxDegIntChr}
Let $\mathcal{H}$ be an ordered 3-uniform hypergraph on $n$ vertices with maximum degree $d$ and with interval chromatic number $3$.
Then there exists an $\varepsilon = \varepsilon(d)>0$ such that
\[
\overline{R}(\mathcal{H}) \leq 2^{O(n^{2-\varepsilon})}.
\]
\end{corollary}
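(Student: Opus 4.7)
The plan is to derive the corollary essentially as a direct consequence of Theorem~\ref{thm-3UnifMaxDegIntChr} applied in the diagonal setting $s=t=n$, together with the monotonicity of ordered Ramsey numbers under taking ordered subhypergraphs. Since all the analytic work is hidden inside Theorem~\ref{thm-3UnifMaxDegIntChr}, the corollary reduces to one structural observation about hypergraphs of interval chromatic number $3$ and one trivial monotonicity step.

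First, I would verify the structural claim that every ordered $3$-uniform hypergraph $\mathcal{H}$ on $n$ vertices with $\chi_<(\mathcal{H})=3$ is isomorphic to an ordered subhypergraph of $\mathcal{K}^{(3)}_3(n)$. By definition of interval chromatic number, the vertex set of $\mathcal{H}$ admits a partition into three consecutive intervals $I_1, I_2, I_3$ such that every hyperedge has at most one vertex in each $I_j$. Padding each $I_j$ with new vertices (inserted so that the three parts remain consecutive intervals) until all three have size $n$ produces an ordered hypergraph on $3n$ vertices containing $\mathcal{H}$ as an ordered subhypergraph; its hyperedges are a subset of those of $\mathcal{K}^{(3)}_3(n)$, so $\mathcal{H}\subseteq \mathcal{K}^{(3)}_3(n)$.

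Next, I would invoke the standard monotonicity property of ordered Ramsey numbers: whenever $\mathcal{H}\subseteq \mathcal{G}$, every red copy of $\mathcal{G}$ inside a $2$-colored $\mathcal{K}^{(3)}_N$ contains a red copy of $\mathcal{H}$, hence $\overline{R}(\mathcal{H},\mathcal{H})\leq \overline{R}(\mathcal{H},\mathcal{G})$. Applying this with $\mathcal{G}=\mathcal{K}^{(3)}_3(n)$ gives
\[
\overline{R}(\mathcal{H})=\overline{R}(\mathcal{H},\mathcal{H})\leq \overline{R}(\mathcal{H},\mathcal{K}^{(3)}_3(n)).
\]
Finally, I would apply Theorem~\ref{thm-3UnifMaxDegIntChr} with $s=t=n$ to the ordered hypergraph $\mathcal{H}$, whose maximum degree is $d$, to obtain
\[
\overline{R}(\mathcal{H},\mathcal{K}^{(3)}_3(n))\leq 2^{O\bigl(n^{2-1/(1+cd^2)}\bigr)}
\]
for an absolute constant $c>0$. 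Setting $\varepsilon(d):=1/(1+cd^2)>0$, which depends only on $d$, yields the desired bound $\overline{R}(\mathcal{H})\leq 2^{O(n^{2-\varepsilon})}$.

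There is no real obstacle here beyond Theorem~\ref{thm-3UnifMaxDegIntChr} itself, which already carries all of the embedding content. The only point that requires any care is making sure that the constant hidden in $O(\cdot)$ in Theorem~\ref{thm-3UnifMaxDegIntChr} (which depends on $d$ through $C(d)$) is absorbed correctly when one replaces $2^{C(d)\cdot n^{2-\varepsilon}}$ by $2^{O(n^{2-\varepsilon})}$ with $d$ fixed; this is routine since $d$ is a parameter of the statement.
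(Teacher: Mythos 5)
Your proposal is correct and follows exactly the paper's route: the paper derives the corollary immediately from Theorem~\ref{thm-3UnifMaxDegIntChr} with $s=t=n$, using the observation that any ordered $3$-uniform hypergraph on $n$ vertices with interval chromatic number $3$ is an ordered subhypergraph of $\mathcal{K}^{(3)}_3(n)$ and the monotonicity $\overline{R}(\mathcal{H},\mathcal{H})\leq\overline{R}(\mathcal{H},\mathcal{K}^{(3)}_3(n))$. Your padding argument and the choice $\varepsilon(d)=1/(1+cd^2)$ fill in the routine details the paper leaves implicit.
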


It might seem wasteful to use Theorem~\ref{thm-3UnifMaxDegIntChr} in order to obtain Corollary~\ref{cor-3UnifMaxDegIntChr}, as the ordered hypergraph $\mathcal{H}$ is much sparser than $\mathcal{K}^{(3)}_3(n)$.
However, as noted in~\cite{clfs17}, this intuition is wrong already for ordered graphs, as there are ordered matchings $\mathcal{M}$ on $n$ vertices with $\chi_<(\mathcal{M})=2$ and ordered graphs $\mathcal{G}$ on $N=2^{n^c}$ vertices for some constant $c>0$ such that $\mathcal{G}$ has edge density at least $1-n^{-c}$ and does not contain $\mathcal{M}$ as an ordered subgraph.
In fact, the best known upper bounds on $\overline{R}(\mathcal{G})$ for $n$-vertex ordered graphs $\mathcal{G}$ with bounded maximum degree and $\chi_<(\mathcal{G})=\chi$ are derived from the bound~\eqref{eq-degreeIntChrNumber} on $\overline{R}(\mathcal{G},\mathcal{K}^{(2)}_\chi(n))$.

The upper bound~\eqref{eq-diagonal} is quite close to the truth, as even when $\mathcal{H}$ is fixed we get a superexponential lower bound on $\overline{R}(\mathcal{H},\mathcal{K}^{(3)}_3(n))$. We note that since the first version of this preprint we learned that independently Fox and He (Theorem 1.3 in~\cite{foxhe19}) proved the same lower bound for the unordered Ramsey number and that implies the result of Theorem \ref{thm-3UnifMaxDegLower}. However we leave this result here as our proof is much simpler. 

\begin{theorem}
\label{thm-3UnifMaxDegLower}
For every $t \geq 3$ and every positive integer $n$, we have
\[\overline{R}(\mathcal{K}^{(3)}_{t+1},\mathcal{K}^{(3)}_3(n)) \geq 2^{\Omega(n \log{n})}.\]
\end{theorem}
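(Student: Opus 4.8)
The plan is to use a random coloring of $\mathcal{K}^{(3)}_N$ with $N = 2^{cn\log n}$ and show that with positive probability it contains neither a blue $\mathcal{K}^{(3)}_{t+1}$ nor a red $\mathcal{K}^{(3)}_3(n)$. Since $\mathcal{K}^{(3)}_{t+1}$ is a fixed hypergraph on $t+1$ vertices, avoiding a blue copy of it is essentially the same obstruction that forces $R(K^{(3)}_{t+1}) \leq N$: a uniform random $2$-coloring contains a blue $\mathcal{K}^{(3)}_{t+1}$ only if some $(t+1)$-set has all its $\binom{t+1}{3}$ triples blue, which is too rare. But to also avoid a red $\mathcal{K}^{(3)}_3(n)$ we need the coloring to be biased: color each triple blue with probability $p$ and red with probability $1-p$, where $p$ is close to $1$. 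Choosing $p = 1 - q$ with $q$ small makes blue copies of the fixed $\mathcal{K}^{(3)}_{t+1}$ very likely locally, so we instead want blue to be the \emph{rare} color; that is, set $\Pr[\text{blue}] = q$ small and $\Pr[\text{red}] = 1-q$. Then a blue $\mathcal{K}^{(3)}_{t+1}$ needs $\binom{t+1}{3}$ independent blue triples, occurring with probability $q^{\binom{t+1}{3}}$ per $(t+1)$-set, and a red $\mathcal{K}^{(3)}_3(n)$ needs $n^3$ red triples among three disjoint $n$-sets, occurring with probability $(1-q)^{n^3}$ per choice of the color classes.

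The key computation is then a union bound. First I would bound the expected number of blue $\mathcal{K}^{(3)}_{t+1}$'s by $\binom{N}{t+1} q^{\binom{t+1}{3}} \leq N^{t+1} q^{\binom{t+1}{3}}$; to make this at most $1/2$ it suffices to take $q \leq N^{-(t+1)/\binom{t+1}{3}} = N^{-6/(t(t-1))}$ (up to constants), i.e. $q$ is a fixed negative power of $N$ depending only on $t$. Second, the expected number of red copies of $\mathcal{K}^{(3)}_3(n)$: a copy is determined by an ordered choice of three disjoint $n$-subsets lying in consecutive intervals, of which there are at most $\binom{N}{3n} \leq N^{3n}$; the probability all $n^3$ relevant triples are red is $(1-q)^{n^3} \leq e^{-q n^3}$. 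So the expected count is at most $N^{3n} e^{-q n^3} = \exp(3n\ln N - q n^3)$. Using $q = N^{-6/(t(t-1))}$ and $N = 2^{cn\log n}$, we have $q = 2^{-6cn\log n/(t(t-1))}$, so $q n^3 = n^3 \cdot 2^{-6cn\log n/(t(t-1))}$ and $3n\ln N = 3cn^2\log n \cdot \ln 2$.

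Here is where the argument as stated breaks down, and this is the main obstacle: with $q$ a \emph{negative} power of $N$ and $N$ superexponential in $n$, the term $qn^3$ is tiny (far below $1$), so $e^{-qn^3} \approx 1$ and the expected number of red $\mathcal{K}^{(3)}_3(n)$'s is astronomically large — we cannot avoid red copies this way. The resolution is to reverse the roles and instead make \emph{red} rare: one should recall that what we actually need is to avoid a blue $\mathcal{K}^{(3)}_{t+1}$, and a clean way to do that for free is to take the coloring to be red on \emph{all} triples within each of a bounded number of large cliques arranged so no $(t+1)$-clique is monochromatic blue — in other words, use a product/blow-up construction. Concretely, the right approach is: fix a $2$-coloring $\phi$ of $\mathcal{K}^{(3)}_{R-1}$ with no blue $\mathcal{K}^{(3)}_{t+1}$ and no red $\mathcal{K}^{(3)}_3$, where $R = R(K^{(3)}_{t+1}, K^{(3)}_3(1))$ is a constant; then take $N = (R-1)^m$ for suitable $m$, partition $[N]$ into intervals according to the base-$(R-1)$ digits, and color a triple by looking at $\phi$ applied to the most significant digit in which the three vertices differ — a Ramsey-type product construction. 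The crucial step is a counting argument showing that in such a product coloring, a red $\mathcal{K}^{(3)}_3(n)$ with consecutive color classes forces the classes to "split" across digits in a way that costs a factor of roughly $n$ per level, yielding that $m = \Omega(\log n)$ levels (hence $N = 2^{\Omega(m)} = 2^{\Omega(\log n)}$) is not enough and one in fact needs $N \geq 2^{\Omega(n\log n)}$; this digit-counting estimate, analogous to the stepping-up arguments of Erd\H{o}s--Hajnal--Rado and to the ordered-graph lower bounds of~\cite{bckk13,clfs17}, is the technical heart of the proof and the step I expect to require the most care. Finally I would verify that no blue $\mathcal{K}^{(3)}_{t+1}$ appears: any $t+1$ vertices, projected to the digit where the "last" pair first differs, give a configuration that $\phi$ colors without a blue $K^{(3)}_{t+1}$, so we are done.
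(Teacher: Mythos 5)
Your opening biased-random-coloring attempt is correctly rejected, but the product/stepping-up construction you pivot to does not work here, and the gap is not just ``technical care''---it is a missing idea.

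First, the base case you propose is degenerate. A $2$-coloring $\phi$ of $\mathcal{K}^{(3)}_{R-1}$ with no red $\mathcal{K}^{(3)}_3$ (a single hyperedge) must be entirely blue, so avoiding a blue $\mathcal{K}^{(3)}_{t+1}$ forces $R-1 \leq t$; your $\phi$ is the all-blue coloring on at most $t$ vertices, which carries no useful combinatorial structure. Second, the coloring rule ``apply $\phi$ to the most significant digit where the three vertices differ'' is ill-defined whenever only two of the three digits are distinct at that position, which is the generic case in a 3-uniform stepping-up; handling it requires an auxiliary graph coloring, and the whole bookkeeping is exactly where such arguments break. Third, and most importantly, iterated digit constructions from a \emph{constant}-size base---the ones behind the ordered-graph lower bounds you cite from~\cite{bckk13,clfs17}---top out around $n^{\Theta(\log n/\log\log n)}=2^{\Theta((\log n)^2/\log\log n)}$, which is nowhere near $2^{\Omega(n\log n)}$. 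You have not given, and I do not believe there is, a digit-counting estimate that would make this construction reach the stated bound.

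The paper's proof (Lemma~\ref{lem-3UnifMaxDegLowerPrecise}) uses a different and quantitatively stronger hybrid construction, in the spirit of~\cite{cfs10}. Fix a $2$-coloring $\chi_1$ of the edges of $\mathcal{K}_R$ with no blue $\mathcal{K}_t$ and no red $\mathcal{K}_{m,m}$ where $m\approx n/4$ and $R=\overline{R}(\mathcal{K}_t,\mathcal{K}_{m,m})-1$; crucially $R$ is \emph{superlinear in $n$}, thanks to the Li--Zang bound $R(K_t,K_{m,m})\geq c(m/\log m)^{(t+1)/2}$ for $t\geq3$. Then color the edges of $\mathcal{K}_N$ uniformly at random with colors from $[R]$ via $\chi_2$, and for $u<v<w$ set $\chi(u,v,w)=\chi_1(\chi_2(u,v),\chi_2(u,w))$ (red if the two colors coincide). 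Absence of a blue $\mathcal{K}^{(3)}_{t+1}$ is then \emph{deterministic}: a blue clique on $v_0<\dots<v_t$ would force the $t$ colors $\chi_2(v_0,v_i)$ to be distinct and induce a blue $K_t$ in $\chi_1$. The red side is a first-moment argument: a red $\mathcal{K}^{(3)}_3(n)$ forces, for each vertex $v_i$ of the first class, the color multisets $\{\chi_2(v_i,v_j)\}$ and $\{\chi_2(v_i,v_k)\}$ towards the other two classes to avoid a red $\mathcal{K}_{m,m}$ in $\chi_1$, hence to be nearly contained in a common set of $O(n)$ colors out of $R$; since $n\ll R^{\alpha}$ for some $\alpha<1$, this event has probability roughly $(n/R)^{\Omega(n)}$ per vertex, and the union bound over $\binom{N}{3n}$ choices closes once $N\leq (R^{1-\alpha})^{\Omega(n)}=2^{\Omega(n\log n)}$. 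Your plan is missing the composition $\chi_1\circ\chi_2$, the use of the \emph{bipartite} Ramsey number $R(K_t,K_{m,m})$ rather than a diagonal one, and the superlinear Li--Zang estimate that supplies the $\log n$ factor in the exponent.
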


We do not have any nontrivial lower bound in the diagonal case $\overline{R}(\mathcal{H})$ for $\mathcal{H}$ with bounded maximum degree and $\chi_<(\mathcal{H})=3$.
We note that even for ordered graphs $\mathcal{G}$ with bounded maximum degree $d$ and $\chi_<(\mathcal{G})=2$ the best known lower bound on $\overline{R}(\mathcal{G})$ (and also on $\overline{R}(\mathcal{H})$) is only of order $\Omega((n/\log{n})^2)$~\cite{bjv16}, while the upper bound on $\overline{R}(\mathcal{G})$ is of order $n^{O(d)}$~\cite{bckk13,clfs17}.

Concerning $k$-uniform hypergraphs with $k>3$, the following result is based on a modification of the proof from~\cite[Proposition~6.3]{cfs10} and gives an estimate on ordered Ramsey numbers of ordered $k$-uniform hypergraphs with bounded interval chromatic number.
In particular, this estimate shows that we do not have a tower-type growth rate for $\overline{R}(\mathcal{H})$ once the uniformity and the interval chromatic number of $\mathcal{H}$ are bounded.

\begin{proposition}
\label{prop-generalBound}
Let $\chi,k$ be integers with $\chi \geq k \geq 2$ and let $\mathcal{H}$ be an ordered $k$-uniform hypergraph on $n$ vertices with interval chromatic number $\chi$.
Then there is a constant $c$ such that
\[\overline{R}(\mathcal{H}) \leq 2^{R^{\chi(\chi-1)}  (c\chi  n)^{\chi-1}},\]
where $R=R(K^{(k)}_\chi)$.
In particular, if the uniformity $k$ and the interval chromatic number $\chi$ of~$\mathcal{H}$ are fixed, we have 
\[
\overline{R}(\mathcal{H}) \leq 2^{O(n^{\chi-1})}.
\]
\end{proposition}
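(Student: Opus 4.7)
Since $\chi_<(\mathcal{H}) = \chi$, the vertex set of $\mathcal{H}$ partitions into $\chi$ consecutive intervals of size at most $n$, with no hyperedge using two vertices from the same interval. Consequently $\mathcal{H}$ is an ordered subhypergraph of $\mathcal{K}^{(k)}_\chi(n)$, so it suffices to prove the stated upper bound for $\overline{R}(\mathcal{K}^{(k)}_\chi(n))$.

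Set $N$ equal to the desired upper bound and write $R=R(K^{(k)}_\chi)$. The plan is to combine a Ramsey reduction with a multipartite K\H{o}v\'{a}ri--S\'{o}s--Tur\'{a}n-type embedding. First I would partition $[N]$ into $R$ consecutive equal-sized blocks $V_1,\ldots,V_R$ and assign to each $I\in\binom{[R]}{k}$ its \emph{majority color} $\mu(I)\in\{\text{red},\text{blue}\}$, that is, the color realized by at least half of the $(N/R)^k$ transversal $k$-tuples over $\{V_i:i\in I\}$. Viewing $\mu$ as a 2-coloring of $\binom{[R]}{k}$ and invoking $R=R(K^{(k)}_\chi)$ yields $\chi$ consecutive blocks $B_1<\cdots<B_\chi$, each of size $M=N/R$, on which the majority color is the same, say blue, for every $k$-subset of $[\chi]$. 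The problem thereby reduces to finding a monochromatic $\mathcal{K}^{(k)}_\chi(n)$ inside a $k$-uniform $\chi$-partite structure whose blue density is at least $1/2$ in each of the $\binom{\chi}{k}$ slices.

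For the embedding I would use an iterated dependent random choice argument. Processing the parts in turn, at each step I would draw a random sample of size roughly $(c\chi n)^{\chi-1}$ from the current part and restrict the remaining parts to those vertices whose links through the sample remain dense in every slice that still involves a yet-unembedded part. A convexity argument then guarantees that after $\chi$ rounds the surviving sets $S_j\subseteq B_j$ each have size at least $n$ and every transversal across them is blue. Tracking the successive shrinkages, the embedding requires $M\geq 2^{(c\chi n)^{\chi-1}}$, while iterating the Ramsey regularization as often as needed to lift each slice density to the level the embedding demands contributes the factor $R^{\chi(\chi-1)}$ in the exponent, producing the claimed bound $N\leq 2^{R^{\chi(\chi-1)}(c\chi n)^{\chi-1}}$.

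The main obstacle is the coupled multipartite dependent random choice when $\chi>k$: the $\binom{\chi}{k}$ slices interact through the vertices they share, so every sampling step must preserve density in all of them simultaneously. Balancing the sample sizes so that each slice's common neighborhood remains large, while driving the densities high enough to finish with a greedy extraction of subsets of size $n$, is the delicate technical core of the proof and is what produces the exponent $(c\chi n)^{\chi-1}$.
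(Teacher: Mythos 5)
Your first reduction (that $\mathcal{H}\subseteq\mathcal{K}^{(k)}_\chi(n)$, so it suffices to treat $\overline{R}(\mathcal{K}^{(k)}_\chi(n))$) matches the paper, but the core of your argument has a genuine gap. After the majority-color step you know only that, inside the $\chi$ chosen blocks, the blue density in each of the $\binom{\chi}{k}$ slices is at least $1/2$, and these are $\binom{\chi}{k}$ \emph{separate} $k$-partite hypergraphs with no further relation. That hypothesis is too weak for any embedding: already for $k=2$, $\chi=3$, let blue between $B_1$ and $B_2$ consist of all pairs whose $B_1$-endpoint lies in the first half of $B_1$, blue between $B_1$ and $B_3$ of all pairs whose $B_1$-endpoint lies in the second half, and blue between $B_2$ and $B_3$ of everything. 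Each slice has blue density at least $1/2$, yet there is no blue transversal triangle at all, hence no blue $\mathcal{K}^{(2)}_3(n)$ even for $n=1$. So no dependent random choice scheme, however carefully balanced, can succeed from the statement you reduce to; the problem is not the ``delicate technical core'' you flag but the reduction itself, which decouples the slices and discards exactly the simultaneity you later need. Your suggestion that iterating the Ramsey regularization lifts the slice densities does not repair this: a majority argument never pushes a density above $1/2$, and you give no mechanism that keeps all slices dense simultaneously once you restrict to subsets chosen to serve one slice.

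The paper keeps the slices coupled by working with $\chi$-tuples of vertices rather than with per-slice densities: since every $R$-subset of $[N]$ contains a monochromatic $K^{(k)}_\chi$, at least a $\frac{1}{2}\binom{R}{\chi}^{-1}$ fraction of all $\chi$-subsets of $[N]$ induce, say, an entirely red copy of $K^{(k)}_\chi$. These $\chi$-sets form a dense auxiliary $\chi$-uniform hypergraph, and Erd\H{o}s's theorem on complete $\chi$-partite subhypergraphs of dense $\chi$-uniform hypergraphs produces a $K^{(\chi)}_\chi(\chi n)$ with parts of size about $\varepsilon(\ln N)^{1/(\chi-1)}$, hence an ordered $\mathcal{K}^{(\chi)}_\chi(n)$. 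Every transversal $\chi$-tuple of this structure induces an all-red $K^{(k)}_\chi$, so every transversal $k$-tuple is red, giving the red $\mathcal{K}^{(k)}_\chi(n)$ and the exponent $(c\chi n)^{\chi-1}$. To salvage your outline you would have to replace the per-slice majority step by a statement about $\chi$-tuples inducing monochromatic copies (or prove a simultaneous multi-slice embedding lemma with quantitative density control after each restriction), which in effect is the paper's route.
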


Our understanding of the ordered Ramsey numbers of ordered hypergraphs is still very limited.
Many interesting open problem arose during our study and we would like to draw attention to some of them in Section~\ref{sec-openProblems}.

\section{Proof of Theorem~\ref{thm-3UnifMaxDegIntChr}}

Here we prove Theorem~\ref{thm-3UnifMaxDegIntChr} by giving an upper bound on the ordered Ramsey numbers of ordered 3-uniform hypergraphs with bounded maximum degree versus $\mathcal{K}^{(3)}_3(n)$.
We prove the following slightly stronger result.

\begin{theorem}
\label{thm-3UnifMaxDegIntChrPrecise}
Let $\mathcal{H}$ be an ordered 3-uniform hypergraph on $t$ vertices with maximum degree $d$, let $s$ be a positive integer and $\rho \in (0,1/8)$ be a real number.
Then there is a constant $C'$ such that
\[\overline{R}(\mathcal{H},\mathcal{K}^{(3)}_3(s)) \leq t \cdot 2^{C'(s^{3/2} \cdot \rho^{-30d^2} \cdot d^6 +s^2\log{(\frac{1}{1-4\rho})})}.\]
\end{theorem}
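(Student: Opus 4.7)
The plan is to set $N = t \cdot M$ with $M$ equal to the stated exponential factor, and to consider an arbitrary red-blue coloring of $\mathcal{K}^{(3)}_N$. I partition $[N]$ into $t$ consecutive intervals $I_1, \ldots, I_t$ of size $M$ and aim to embed the $i$-th vertex of $\mathcal{H}$ (in its vertex ordering) into some $x_i \in I_i$ so that every hyperedge of $\mathcal{H}$ becomes blue. The goal is to show that whenever there is no red copy of $\mathcal{K}^{(3)}_3(s)$, such a blue embedding exists.

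The first step is a density reduction via the K\H{o}v\'{a}ri--S\'{o}s--Tur\'{a}n-type bound for $K^{(3)}_3(s)$ in tripartite 3-uniform hypergraphs, which guarantees that any tripartite 3-uniform hypergraph on parts of size $M$ with more than $\Theta(M^{3-1/s^2})$ edges contains $K^{(3)}_3(s)$. Applied to the tripartite sub-hypergraph induced on $I_a \cup I_b \cup I_c$ for every $a<b<c$: if any such triple has red density exceeding $\Theta(M^{-1/s^2})$, it already yields a red $\mathcal{K}^{(3)}_3(s)$. Otherwise, the blue density in every triple of intervals is at least $1 - \rho'$ for some small $\rho' = \rho'(s, M)$, and we proceed to embed $\mathcal{H}$ in blue.

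The core of the argument is an embedding lemma adapted from Conlon, Fox, and Sudakov~\cite{cfs12}, originally stated for induced copies of a fixed tripartite 3-uniform hypergraph. I adapt it in three ways: (i) to the ordered setting, so that the $i$-th vertex of $\mathcal{H}$ lies in $I_i$; (ii) to non-induced copies, which is easier; (iii) to allow $\mathcal{H}$ to have arbitrarily many vertices by imposing only a bounded maximum degree rather than fixing $\mathcal{H}$. The engine of the lemma is an iterated dependent random choice: for each relevant triple of intervals, one successively refines subsets so that most pairs in two of the intervals have blue codegree very close to full in the third. The iteration depth scales with $d^2$, producing the $\rho^{-30 d^2}$ factor, while the density threshold needed to sustain the iteration yields the $s^2 \log\bigl(1/(1-4\rho)\bigr)$ term, and a mild polynomial overhead in $d$ gives the $d^6$ prefactor.

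Finally, I embed $\mathcal{H}$ greedily in its vertex order: when placing $v_i$, at most $d$ hyperedges $\{v_a,v_b,v_i\}$ with $a<b<i$ already have their two earlier endpoints fixed, and for each of them the codegree structure from the previous step guarantees that a positive fraction of $I_i$ is a valid choice; the intersection over these at most $d$ constraints is still nonempty, producing $x_i$. The main obstacle is the embedding step: one must iterate dependent random choice carefully so that the refined subsets remain large enough, the codegree condition survives simultaneously across all the interval triples queried later in the embedding, and the final parameters match the specific $s^{3/2} \rho^{-30d^2} d^6$ exponent in the theorem (the unusual $s^{3/2}$ scaling, rather than the naive $s^2$ one would get from a direct application of K\H{o}v\'{a}ri--S\'{o}s--Tur\'{a}n, is the most delicate aspect to recover and forces a non-trivial trade-off between the sizes of the refined subsets and the sharpness of the codegree condition).
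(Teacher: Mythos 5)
Your overall frame (partition into $t$ intervals, a dichotomy between a red $\mathcal{K}^{(3)}_3(s)$ and a blue greedy embedding of $\mathcal{H}$, and an embedding lemma adapted from~\cite{cfs12}) matches the paper's, but the dichotomy you actually propose is the wrong one, and it cannot deliver the stated bound. You split according to whether some triple of intervals has red \emph{density} above the K\H{o}v\'{a}ri--S\'{o}s--Tur\'{a}n/Erd\H{o}s box threshold $\Theta(M^{-1/s^2})$. Quantitatively this already fails: with $M = 2^{C'(s^{3/2}\rho^{-30d^2}d^6 + s^2\log\frac{1}{1-4\rho})}$ one has $M^{-1/s^2} = (1-4\rho)^{C'}\cdot 2^{-C's^{-1/2}\rho^{-30d^2}d^6}$, which for large $s$ is a constant bounded away from $0$, so your ``otherwise'' branch only guarantees blue density $1-\Theta(1)$; and conversely, extracting a red $K^{(3)}_3(s)$ from a red tripartite hypergraph of constant density via a box-theorem count needs $m \geq 2^{\Theta(s^2\log(1/\mathrm{density}))}$, i.e.\ an exponent whose $s^2$ coefficient is a constant depending on $\rho$ and $d$, whereas the theorem needs that coefficient to be $\log\frac{1}{1-4\rho} = O(\rho)$ --- this smallness is exactly what makes the final exponent subquadratic after the choice $\rho = s^{-1/(2+60d^2)}$ in the deduction of Theorem~\ref{thm-3UnifMaxDegIntChr}. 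In the paper the red branch is not a KST-type argument at all: the dichotomy is ``the blue hypergraph is tri-$(\varepsilon,\rho,N/t)$-dense or not'', a condition quantified over \emph{all} consecutive triples of subsets and all auxiliary bipartite graphs; its failure produces a tripartite graph with at least $\varepsilon m^3$ triangles of which a $(1-\rho)$-fraction are red, and Lemma~\ref{lem-tripartite} (Conlon--Fox--Sudakov) then yields a red $\mathcal{K}^{(3)}_3(s)$ provided $e^{2^{10}\varepsilon^{-2}s^{3/2}}(1-4\rho)^{-4s^2}(\varepsilon/16)^{4s}\leq m$. That lemma --- not any trade-off inside your iteration --- is the source of both the $s^{3/2}$ term and the $s^2\log\frac{1}{1-4\rho}$ term, while $\rho^{-30d^2}d^6$ is just $\varepsilon^{-2}$ for the choice $\varepsilon = 2^{-6}\rho^{15d^2}d^{-3}$ forced by the embedding lemma. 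Your own remark that the $s^{3/2}$ scaling is ``the most delicate aspect to recover'' is precisely the missing ingredient.

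The blue branch has a gap as well: density $1-\rho'$ between the fixed interval triples is not a hereditary hypothesis, so it cannot sustain the vertex-by-vertex embedding --- after a few steps the candidate sets are small subsets of the intervals, about which interval-to-interval density says nothing. The paper explicitly recalls the obstruction from~\cite{clfs17}: there are ordered graphs of density $1-n^{-c}$ on $2^{n^c}$ vertices containing no ordered matching on $n$ vertices, so ``density close to $1$'' alone cannot suffice once $t$ grows, no matter how close. This is exactly why Lemma~\ref{lem-embedding} carries along bi-$(\varepsilon^i_j,\varepsilon^i_k,\rho^i_{j,k})$-dense link graphs as an invariant (a hereditary condition on all large subsets), and why the dichotomy must be formulated in terms of tri-density rather than density. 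Your appeal to ``iterated dependent random choice'' gestures at recovering such hereditary control, but it is not carried out, and with the dichotomy as you state it there is nothing to iterate on; as proposed, neither branch of the argument closes.
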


First, we show that Theorem~\ref{thm-3UnifMaxDegIntChrPrecise} implies Theorem~\ref{thm-3UnifMaxDegIntChr}.

\begin{proof}[Proof of Theorem~\ref{thm-3UnifMaxDegIntChr}]
We can assume that $s > 8^{2+60d^2}$ by choosing the constant $C=C(d)$ from the statement of Theorem~\ref{thm-3UnifMaxDegIntChr} sufficiently large.
We then choose $\rho = s^{-1/(2+60d^2)}$, which gives $\rho < 1/8$ by our assumption on $s$.
Since $1-z \geq e^{-2z}$ for every $z$ with $0\leq z \leq 1/2$, we obtain $(1-4\rho)^{-s^2} \leq e^{8\rho s^2}$ and thus $s^2\log{(\frac{1}{1-4\rho})}) \leq 8s^2\rho$.
By our choice of~$\rho$, we then get
\[s^{3/2}\rho^{-30d^2} = s^2\rho = s^{2-1/(2+60d^2)}.\]
Therefore we can rewrite the upper bound from Theorem~\ref{thm-3UnifMaxDegIntChrPrecise} as \[\overline{R}(\mathcal{H},\mathcal{K}^{(3)}_3(s)) \leq t \cdot 2^{C'(d^6 + 8)s^2\rho} \leq 2^{Cs^{2-1/(2+60d^2)}}\]
for sufficiently large $C=C(d)$,
which concludes the proof.
\end{proof}

In the rest of the section, we prove Theorem~\ref{thm-3UnifMaxDegIntChrPrecise}, but we first state some definitions.
For a graph $G$ and two disjoint subsets $X$ and $Y$ of vertices of $G$, we use $d(X,Y)$ to denote the \emph{edge density} between $X$ and $Y$, that is, $d(X,Y) = \frac{e(X,Y)}{|X||Y|}$, where $e(X,Y)$ denotes the number of edges with one vertex in $X$ and with the other one in $Y$.

For positive real numbers $\varepsilon_1,\varepsilon_2$, and $\rho$, we say that an ordered bipartite graph $\mathcal{G}$ with consecutive color classes $U$ and $V$ is \emph{bi-$(\varepsilon_1,\varepsilon_2,\rho)$-dense between $U$ and $V$}, if for all sets $X\subseteq U$ and $Y \subseteq V$ with $|X| \geq \varepsilon_1 |U|$ and $|Y| \geq \varepsilon_2 |V|$ we have $d(X,Y) \geq \rho$.

For positive real numbers $\varepsilon$ and $\rho$ and a positive integer $m$, an ordered 3-uniform hypergraph $\mathcal{H}$ is \emph{tri-$(\varepsilon,\rho,m)$-dense}, if for all consecutive subsets $V_1,V_2,V_3$ of vertices of $\mathcal{H}$, each of size at most $m$, and for all bipartite graphs $G_{1,2},G_{1,3},G_{2,3}$, each $G_{i,j}$ between $V_i$ and $V_j$, for which there are at least $\varepsilon m^3$ triangles with one edge in each $G_{i,j}$, at least $\rho$-proportion of these triangles forms hyperedges in $\mathcal{H}$.

The following embedding lemma is based on a similar result from~\cite{cfs12}.

\begin{lemma}\label{lem-embedding}
Let $\mathcal{H}$ be an ordered 3-uniform hypergraph on $t$ vertices with maximum degree $d$.
Let $ \varepsilon >0$ and $\rho \in (0,1)$ be two real numbers with $ \varepsilon \leq 2^{-6} \cdot \rho^{15d^2} \cdot d^{-3}$.
If $\mathcal{G}$ is a tri-$(\varepsilon,\rho,n/t)$-dense ordered 3-uniform hypergraph on $n \geq t/\varepsilon$ vertices, then $\mathcal{H}$ is an ordered subhypergraph of~$\mathcal{G}$. 
\end{lemma}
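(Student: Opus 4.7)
The plan is to embed the vertices of $\mathcal{H}$ greedily in their prescribed order, maintaining at every step enough ``alive'' candidate vertices and bi-dense link graphs so that the next embedding step can always be made. Setting $m := n/t$ (so $m \geq 1/\varepsilon$) and splitting $V(\mathcal{G})$ into $t$ consecutive intervals $V_1, \dots, V_t$ of size $m$, I would aim to place the $i$-th vertex $h_i$ of $\mathcal{H}$ into $V_i$; the ordering is then preserved automatically.

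After having embedded $h_1,\dots,h_{i-1}$ at vertices $x_1,\dots,x_{i-1}$, I would maintain two kinds of data for every not-yet-embedded $h_j$ ($j\geq i$): a candidate set $C_j\subseteq V_j$, and, for every pair $j<k$ of unembedded vertices that already share a hyperedge of $\mathcal{H}$ with some embedded vertex $h_s$ (i.e.\ $s<i$), a ``link'' bipartite graph $L_{j,k}\subseteq C_j\times C_k$ consisting of the pairs $(y,z)$ with $\{x_s,y,z\}\in E(\mathcal{G})$ (intersected over all such $s$). The invariants would be: (I1) $|C_j|\geq \rho^{a_j}m$, where $a_j$ counts the number of hyperedges of $\mathcal{H}$ already used to constrain $h_j$; and (I2) each surviving link graph $L_{j,k}$ is bi-$(\rho^{b_j},\rho^{b_k},\rho')$-dense between $C_j$ and $C_k$, for suitable exponents $b_j,b_k\leq d$ and density threshold $\rho'=\rho^{O(d)}$.

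To embed $h_i$, I would bound the set $B\subseteq C_i$ of ``bad'' candidates whose placement would violate (I1) or (I2) after the data is updated, and show $|B|<|C_i|$. A bad $x\in C_i$ falls into one of two categories. In case (a), some hyperedge $\{h_s,h_i,h_j\}$ with $s<i<j$ shrinks the candidate set to $C_j':=\{y\in C_j:\{x_s,x,y\}\in E(\mathcal{G})\}$, which must still satisfy (I1); the bi-density of the current link $L_{i,j}$ applied with $Y=C_j$ bounds the count of such bad $x$ by $\rho^{b_i}|C_i|$. In case (b), some hyperedge $\{h_i,h_j,h_k\}$ with $i<j<k$ makes the new link $L_{j,k}':=L_{j,k}\cap\{(y,z):\{x,y,z\}\in E(\mathcal{G})\}$ fail bi-density; here I would invoke the tri-$(\varepsilon,\rho,m)$-density of $\mathcal{G}$ on the consecutive sets $C_i\subseteq V_i$, $C_j\subseteq V_j$, $C_k\subseteq V_k$, with the bipartite graphs taken to be $C_i\times C_j$, $C_i\times C_k$ (complete), and $L_{j,k}$. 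Since $|C_i|\cdot |L_{j,k}|\geq \rho^{O(d)}m^3\geq \varepsilon m^3$ by (I1) and (I2), tri-density yields that at least a $\rho$-fraction of the triangles are hyperedges of $\mathcal{G}$; an iterated Markov-style argument on the link-density random variable then forces the updated $L_{j,k}'$ to retain the required bi-density for all but an $\varepsilon/\rho^{O(d)}$-fraction of $x\in C_i$.

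A union bound over the at most $2d$ unembedded neighbors in case (a) and the at most $\binom{d}{2}$ new pair-constraints in case (b), combined with the hypothesis $\varepsilon\leq 2^{-6}\rho^{15d^2}d^{-3}$, gives $|B|<|C_i|$, so a valid image $x_i$ always exists and the induction proceeds. The main obstacle is the parameter bookkeeping: each $C_j$ can be shrunk up to $O(d)$ times and each $L_{j,k}$ can absorb up to $O(d)$ additional constraints, producing a cumulative degradation of order $\rho^{O(d^2)}$ in the relevant density parameters, which is precisely what forces the $15d^2$ exponent and the $d^{-3}$ factor in the bound on $\varepsilon$. Once the exponents $a_j$ and $b_j$ are chosen consistently so that the tri-density error terms are always dominated by the room left inside the bi-densities, the greedy procedure runs to completion and yields the desired order-preserving embedding of $\mathcal{H}$ into $\mathcal{G}$.
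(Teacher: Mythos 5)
Your overall plan matches the paper's quite closely: the same greedy order-preserving embedding into $t$ consecutive intervals of length $m=n/t$, the same two kinds of data (shrinking candidate sets and evolving link graphs), the same pair of invariants (a lower bound of the form $\rho^{O(d)}m$ on candidate-set sizes and bi-density of the links with slowly degrading parameters), and the same split of ``bad'' vertices into case (a) (too few link neighbours) and case (b) (restricted link fails bi-density). Case (a) is fine as you describe it. The gap is in case (b), where you try to conclude by applying tri-density to the tripartite graph formed by the complete bipartite graphs $C_i\times C_j$, $C_i\times C_k$ and the current link $L_{j,k}$, and then invoke ``an iterated Markov-style argument on the link-density random variable.'' That does not give you what you need. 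Tri-density on that triple only tells you that $\sum_{x\in C_i}\lvert\{(y,z)\in L_{j,k}:\{x,y,z\}\in E(\mathcal G)\}\rvert$ is at least a $\rho$-fraction of $\lvert C_i\rvert\cdot\lvert L_{j,k}\rvert$, i.e.\ the restricted links $L_{j,k}'(x)$ are dense \emph{on average}. A Markov argument then controls how many $x$ have a low \emph{edge density}, but bi-density is a universally quantified condition over all large pairs of subsets, and a single density random variable (or any fixed number of them) cannot certify it; you cannot union bound over the exponentially many subset pairs.

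What the paper does instead is a \emph{witness-assembly} trick, and you need it. For each bad $w$ (i.e.\ each $w$ for which the restricted link fails bi-density), extract a specific witness pair $Y_j(w)\subseteq U^i_j(w)$, $Y_k(w)\subseteq U^i_k(w)$ of large subsets with density below the target threshold in the restricted link. Now build auxiliary bipartite graphs $J_{i+1,j}$ and $J_{i+1,k}$ joining each bad $w$ exactly to $Y_j(w)$ and $Y_k(w)$ respectively (not to all of $C_j$, $C_k$), and apply tri-density to the tripartite graph formed by $J_{i+1,j}$, $J_{i+1,k}$ and $G^i_{j,k}$. The bi-density of $G^i_{j,k}$ guarantees at least $\rho^i_{j,k}\sum_w\lvert Y_j(w)\rvert\,\lvert Y_k(w)\rvert$ triangles; if the bad set were large this would exceed $\varepsilon m^3$, so tri-density would force at least $\rho\cdot\rho^i_{j,k}\sum_w\lvert Y_j(w)\rvert\,\lvert Y_k(w)\rvert$ of them to be hyperedges of $\mathcal G$. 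But by the choice of the $Y$'s, each bad $w$ contributes fewer than $\rho^{i+1}_{j,k}\lvert Y_j(w)\rvert\,\lvert Y_k(w)\rvert\leq\rho\cdot\rho^i_{j,k}\lvert Y_j(w)\rvert\,\lvert Y_k(w)\rvert$ hyperedges of $\mathcal G$ into this tripartite graph, a contradiction. This is the step your proposal skips, and without it the induction does not close. The rest of your sketch (the union bound over at most $d$ new pair constraints plus at most $2d$ shrinks, and the bookkeeping producing $\rho^{O(d^2)}$ degradation and hence the $\rho^{15d^2}d^{-3}$ threshold for $\varepsilon$) is consistent with the paper and would go through once case (b) is repaired in this way.
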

\begin{proof}
Let $v_1 \prec \cdots \prec v_t$ be the vertices of $\mathcal{H}=(H,\prec)$.
For a vertex $v_j$ of $\mathcal{H}$ with $j>i$, we use ${\rm deg}^{{\preceq}i}_\mathcal{H}(j)$ to denote the number $\sum_{e \in E(\mathcal{H}): v_j \in e} |e \cap \{v_1,\dots,v_i\}|$.
Similarly, for vertices $v_j$ and $v_k$ of $\mathcal{H}$ and $i<j,k$, we use ${\rm deg}^{{\preceq}i}_\mathcal{H}(j,k)$ to denote the number of hyperedges of $\mathcal{H}$ containing $v_j$, $v_k$, and a vertex from $\{v_1,\dots,v_i\}$.
Observe that ${\rm deg}^{{\preceq}i}_\mathcal{H}(j) \leq 2d$  and ${\rm deg}^{{\preceq}i}_\mathcal{H}(j,k) \leq d$ for all $j$ and $k$ with $i<j,k$, as the maximum degree of $\mathcal{H}$ is $d$ and each hyperedge is multiplied by at most $2$.

We partition the vertex set of $\mathcal{G}$ into consecutive intervals $U_1,\dots,U_t$, each of size $n/t$.
We embed a copy $f(\mathcal{H})$ of $\mathcal{H}$ in $\mathcal{G}$ one vertex at time, embedding each vertex $f(v_i)$ of $f(\mathcal{H})$ to~$U_i$.
We proceed by induction on $i=0,1,\dots,t-1$.
Assuming that the vertices $v_1,\dots,v_i$ have been embedded as $f(v_1),\dots,f(v_i)$, we show that there are sets $U^i_{i+1},\dots,U^i_t$ and \allowbreak graphs $G^i_{j,k}$ with $i<j<k\leq t$ such that the following three conditions are satisfied:
\begin{enumerate}[label=(\roman*)]
    \item\label{item-embed1} $|U^i_j| \geq C^i_j \cdot n/t$, where $C^i_j = \rho^{d \cdot {\rm deg}^{{\preceq}i}_\mathcal{H}(j)}$,
    \item\label{item-embed2} $G^i_{j,k}$ is a bipartite graph between $U^i_j$ and $U^i_k$, which is bi-$(\varepsilon^i_j,\varepsilon^i_k,\rho^i_{j,k})$-dense between $U^i_j$ and $U^i_k$ with $\varepsilon^i_j = \rho^{4d^2 - d \cdot {\rm deg}^{{\preceq}i}_\mathcal{H}(j)}/(4d)$, $\varepsilon^i_k = \rho^{4d^2 - d \cdot {\rm deg}^{{\preceq}i}_\mathcal{H}(k)}/(4d)$, and $\rho^i_{j,k} = \rho^{{\rm deg}^{{\preceq}i}_\mathcal{H}(j,k)}$, and
    
    \item\label{item-embed3} for every $h \leq i$, every edge of $G^i_{j,k}$ forms a hyperedge of $\mathcal{G}$ with $f(v_h)$ if $\{v_h,v_j,v_k\}$ is a hyperedge of $\mathcal{H}$. Also, for all $h_1$ and $h_2$ with $h_1 < h_2 \leq i$, every vertex $u \in U^i_j$ with $i<j$ forms a hyperedge $\{ f(v_{h_1}),f(v_{h_2}),u\}$ of $\mathcal{G}$ if $\{v_{h_1},v_{h_2},v_j\}$ is a hyperedge of $\mathcal{H}$.
\end{enumerate}

For the induction base, assume $i=0$ and set $U^0_j = U_j$ for every $j \in \{1,\dots,t\}$.
For all $j$ and $k$ with $1 \leq j<k\leq t$, we let $G^i_{j,k}$ be the complete bipartite graph between $U^0_j$ and $U^0_k$.
Then the three conditions are trivially satisfied.

For the induction step, assume that the vertices $v_1,\dots,v_i$ have been embedded for some $i \geq 0$ while maintaining conditions~\ref{item-embed1}, \ref{item-embed2}, and \ref{item-embed3}.
We show how to embed the vertex $v_{i+1}$.

We let $W_{i+1}$ be the set of vertices $w$ from $U^i_{i+1}$ such that the neighborhood $U^i_j(w)$ of $w$ in $G^i_{i+1,j}$ has size at least $\rho^i_{i+1,j} |U^i_j|$ for every $j \in \{i+2,\dots,t\}$ such that $v_{i+1}$ and $v_j$ are contained in a hyperedge of $\mathcal{H}$.
Since the graph $G^i_{i+1,j}$ is bi-$(\varepsilon^i_{i+1},\varepsilon^i_j,\rho^i_{i+1,j})$-dense, there are at most $\varepsilon^i_{i+1}|U^i_{i+1}| \leq  \varepsilon^i_{i+1} n/t$ vertices in $U_{i+1}$ that have less than $\rho^i_{i+1,j} |U^i_j|$ neighbors in $|U^i_j|$.
Using condition~\ref{item-embed1} and the fact that the maximum degree of $\mathcal{H}$ is $d$, we see that the size of~$W_{i+1}$ satisfies
\begin{align*}
|W_{i+1}| &\geq |U^i_{i+1}| - 2d\varepsilon^i_{i+1}|U^i_{i+1}| \geq |U^i_{i+1}| - 2d\varepsilon^i_{i+1} n/t \geq C^i_{i+1} n/t - 2d\varepsilon^i_{i+1} n/t \geq C^i_{i+1} n/(2t),
\end{align*}
where the last inequality follows from $\varepsilon^i_{i+1} \leq C^i_{i+1}/(4d)$, as ${\rm deg}^{{\preceq}i}_\mathcal{H}(i+1) \leq 2d$.

For a vertex $w \in W_{i+1}$ and indices $j$ and $k$ such that $i+1<j<k$ and $\{v_{i+1},v_j,v_k\} \in E(\mathcal{H})$, we define the graph $H_{j,k}(w)$ as a subgraph of $G^i_{j,k}$ between $U^i_j(w)$ and $U^i_k(w)$ consisting of edges $\{x,y\}$ such that $\{w,x,y\} \in E(\mathcal{G})$.
We also let $W^{i+1}_{j,k}$ be the set of vertices $w \in W_{i+1}$ such that the graph $H_{j,k}(w)$ is not bi-$(\varepsilon^{i+1}_j,\varepsilon^{i+1}_k,\rho^{i+1}_{j,k})$-dense between $U^i_j(w)$ and $U^i_k(w)$.

By the definition of bi-$(\varepsilon^{i+1}_j,\varepsilon^{i+1}_k,\rho^{i+1}_{j,k})$-density, for every $w \in W^{i+1}_{j,k}$, there are sets $Y_j(w) \subseteq U^i_j(w)$ and $Y_k(w) \subseteq U^i_k(w)$ such that $|Y_j(w)| \geq \varepsilon^{i+1}_j|U^i_j(w)|$, $|Y_k(w)| \geq \varepsilon^{i+1}_k|U^i_k(w)|$, and $d(Y_j(w),Y_k(w)) < \rho^{i+1}_{j,k}$ in $H_{j,k}(w)$.
Since $w \in W_{i+1}$, we have
\[|Y_j(w)| \geq \varepsilon^{i+1}_j|U^i_j(w)| \geq \varepsilon^{i+1}_j\cdot \rho^i_{i+1,j} |U^i_j| \geq \varepsilon^i_j |U^i_j|,\]
where the last inequality follows from $\varepsilon^{i+1}_j\cdot \rho^i_{i+1,j} \geq \varepsilon^i_j$, as, since $\{v_{i+1},v_j,v_k\} \in E(\mathcal{H})$, we have ${\rm deg}_{\mathcal{H}}^{{\preceq}i+1}(j) > {\rm deg}_{\mathcal{H}}^{{\preceq}i}(j)$ and ${\rm deg}^{{\preceq}i}_\mathcal{H}(i+1,j) \leq d$.
Analogously, we obtain $|Y_k(w)| \geq \varepsilon^i_k |U^i_k|$.

We let $J_{i+1,j}$ be the graph connecting each $w \in W^{i+1}_{j,k}$ to vertices from $Y_j(w)$ and we analogously define the graph $J_{i+1,k}$.
The graph $G^i_{j,k}$ is bi-$(\varepsilon^i_j,\varepsilon^i_k,\rho^i_{j,k})$-dense between $U^i_j$ and $U^i_k$ by condition~\ref{item-embed2}.
Thus, since $|Y_j(w)| \geq \varepsilon^i_j |U^i_j|$ and $|Y_k(w)| \geq \varepsilon^i_k |U^i_k|$, the number of triangles in the tripartite graph between $W^{i+1}_{j,k}$, $U^i_j$, and $U^i_k$ formed by $J_{i+1,j}$, $J_{i+1,k}$, and $G^i_{j,k}$ is at least $\rho^i_{j,k}\sum_{w \in W^{i+1}_{j,k}}|Y_j(w)||Y_k(w)|$.

Suppose for contradiction that $|W^{i+1}_{j,k}| \geq |W_{i+1}|/(2d)$ for some $j$ and $k$ with $i+1<j<k$ and $\{v_{i+1},v_j,v_k\}\in E(\mathcal{H})$.
Then, since $|Y_j(w)| \geq \varepsilon^i_j |U^i_j|$ and $|Y_k(w)| \geq \varepsilon^i_k |U^i_k|$ for every $w \in W^{i+1}_{j,k}$, 
\[\rho^i_{j,k}\sum_{w \in W^{i+1}_{j,k}}|Y_j(w)||Y_k(w)| \geq \rho^i_{j,k}\varepsilon^i_j\varepsilon^i_k |W^{i+1}_{j,k}||U^i_j||U^i_k| \geq \rho^i_{j,k}\varepsilon^i_j\varepsilon^i_k C^i_jC^i_k |W^{i+1}_{j,k}| (n/t)^2,\]
where the last inequality follows from condition~\ref{item-embed1}.
Using the assumption $|W^{i+1}_{j,k}| \geq |W_{i+1}|/(2d)$ and the fact $|W_{i+1}| \geq C^i_{i+1}n/(2t)$, we can estimate the above expression from below by 
\[\rho^i_{j,k}\varepsilon^i_j\varepsilon^i_k C^i_jC^i_k C^i_{i+1} n^3/(4dt^3)\]
Our choice of parameters then gives $\rho^i_{j,k}\varepsilon^i_j\varepsilon^i_k C^i_jC^i_k C^i_{i+1}/(4dt^3) \geq \varepsilon/t^3$.
Altogether, there are at least 
\[\rho^i_{j,k}\sum_{w \in W^{i+1}_{j,k}}|Y_j(w)||Y_k(w)| \geq \varepsilon (n/t)^3\] 
triangles in the tripartite graph between $W^{i+1}_{j,k}$, $U^i_j$, and $U^i_k$ formed by $J_{i+1,j}$, $J_{i+1,k}$, and $G^i_{j,k}$.
Thus, since the ordered hypergraph $\mathcal{G}$ is tri-$(\varepsilon,\rho,n/t)$-dense, at least $\rho$-proportion of these triangles forms hyperedges in $\mathcal{G}$. 
Therefore there are at least $\rho \cdot \rho^i_{j,k}\sum_{w \in W^{i+1}_{j,k}}|Y_j(w)||Y_k(w)|$ hyperedges of $\mathcal{G}$ between $W^{i+1}_{j,k}$, $U^i_j$, and $U^i_k$.

On the other hand, the number of hyperedges of $\mathcal{G}$ containing a vertex $w \in W^{i+1}_{j,k}$ and having an edge in each of the graphs $J_{i+1,j}$, $J_{i+1,k}$, and $G^i_{j,k}$ is the number of edges of~$H_{j,k}(w)$ between $Y_j(w)$ and $Y_k(w)$.
This number of edges is less than $\rho^{i+1}_{j,k}|Y_j(w)||Y_k(w)|$, as we know that $d(Y_j(w),Y_k(w)) < \rho^{i+1}_{j,k}$ in $H_{j,k}(w)$.
Thus the number of the hyperedges of $\mathcal{G}$ is less than 
\[\rho^{i+1}_{j,k}\sum_{w \in W^{i+1}_{j,k}} |Y_j(w)||Y_k(w)| \leq \rho \cdot \rho^i_{j,k}\sum_{w \in W^{i+1}_{j,k}} |Y_j(w)||Y_k(w)|,\]
where the least inequality follows from $\rho^{i+1}_{j,k} \leq \rho \cdot \rho^i_{j,k}$, as $\{v_{i+1},v_j,v_k\} \in E(\mathcal{H})$ and thus we have ${\rm deg}_{\mathcal{H}}^{{\preceq}i+1}(j,k) > {\rm deg}_{\mathcal{H}}^{{\preceq}i}(j,k)$.
This contradicts the fact that there are at least $\rho~\cdot~ \rho^i_{j,k}\sum_{w \in W^{i+1}_{j,k}} |Y_j(w)||Y_k(w)|$ such hyperedges  of $\mathcal{G}$.

Thus $|W^{i+1}_{j,k}| < |W_{i+1}|/(2d)$.
In particular, the number of vertices $w \in W_{i+1}$ that do not lie in any set $W^{i+1}_{j,k}$ such that $i+1 < j <k \leq t$ and $\{v_{i+1},v_j,v_k\} \in E(\mathcal{H})$ is at least
\[|W_{i+1}| - \sum_{j<k: \{v_{i+1},v_j,v_k\} \in E(\mathcal{H})} |W^{i+1}_{j,k}| > |W_{i+1}| - \frac{d |W_{i+1}|}{2d} = \frac{|W_{i+1}|}{2},\]
as we are summing over at most $d$ pairs $(j,k)$, because the maximum degree of $\mathcal{H}$ is $d$.
Since $|W_{i+1}| \geq C^i_{i+1}n/(2t)$, we have at least $C^i_{i+1}n/(4t)$ such vertices and we let $f(v_{i+1})$ be any of them.
Since $n \geq t/\varepsilon > 4t/C^i_{i+1}$, at least one such vertex indeed exists.
For every $j \in \{i+2,\dots,t\}$ such that $v_{i+1}$ and $v_j$ are contained in a hyperedge of $\mathcal{H}$, we let $U^{i+1}_j=U^i_j(f(v_{i+1}))$.
We keep $U^{i+1}_j=U^i_j$ for all other values $j$.
Let $j$ and $k$ be indices such that $i+1 < j < k \leq t$.
If $\{v_{i+1},v_j,v_k\} \in E(\mathcal{H})$, we set $G^{i+1}_{j,k}=H_{j,k}(f(v_{i+1}))$.
For all other values of $j$ and $k$, we let $G^{i+1}_{j,k}$ be the subgraph of $G^i_{j,k}$ induced by $U^{i+1}_j$ and $U^{i+1}_k$.
In particular, if none of the vertices $v_j$ and $v_k$ lies in a hyperedge of $\mathcal{H}$ with $v_{i+1}$, we have $G^{i+1}_{j,k} = G^i_{j,k}$.

To finish the induction step, it remains to verify conditions~\ref{item-embed1}, \ref{item-embed2}, and~\ref{item-embed3}.
To verify condition~\ref{item-embed1}, first observe that if, for $j \in \{i+2,\dots,k\}$, the vertex $v_j$ is not in a hyperedge of $\mathcal{G}$ with $v_{i+1}$, then, by the choice of $C^i_j$ and $C^{i+1}_j$, $|U^{i+1}_j| = |U^i_j| \geq C^i_j n/t =C^{i+1}_j n/t$, as ${\rm deg}^{{\preceq}i}_\mathcal{H}(j) = {\rm deg}^{{\preceq}i+1}_\mathcal{H}(j)$.
Otherwise $|U^{i+1}_j| = |U^i_j(f(v_{i+1}))|$.
Since $f(v_{i+1}) \in W_{i+1}$, we have $|U^i_j(f(v_{i+1}))| \geq \rho^i_{i+1,j}|U^i_j|$.
Since ${\rm deg}^{{\preceq}i}_\mathcal{H}(j) < {\rm deg}^{{\preceq}i+1}_\mathcal{H}(j)$ and ${\rm deg}^{{\preceq}i}_\mathcal{H}(i+1,j) \leq d$, we have $\rho^i_{i+1,j}C^i_j \geq C^{i+1}_j$.
So we obtain $|U^{i+1}_j| \geq C^{i+1}_j n/t$.
Thus condition~\ref{item-embed1} is satisfied.

For condition~\ref{item-embed2}, let $j$ and $k$ be indices such that $i+1 < j<k\leq t$.
Consider first the case when $\{v_{i+1},v_j,v_k\} \in E(\mathcal{H})$.
Then $G^{i+1}_{j,k} = H_{j,k}(f(v_{i+1}))$.
Since $f(v_{i+1})$ does not lie in any set $W^{i+1}_{j,k}$ such that $i+1<j<k$ and $\{v_{i+1},v_j,v_k\} \in E(\mathcal{H})$, the graph $H_{j,k}(f(v_{i+1}))$ is bi-$(\varepsilon^{i+1}_j,\varepsilon^{i+1}_k,\rho^{i+1}_{j,k})$-dense between $U^i_j(f(v_{i+1}))$ and $U^i_k(f(v_{i+1}))$.
Thus $G^{i+1}_{j,k} = H_{j,k}(f(v_{i+1}))$ is bi-$(\varepsilon^{i+1}_j,\varepsilon^{i+1}_k,\rho^{i+1}_{j,k})$-dense between $U^{i+1}_j=U^i_j(f(v_{i+1}))$ and $U^{i+1}_k=U^i_k(f(v_{i+1}))$, which verifies condition~\ref{item-embed2} in this case.
Now, assume $\{v_{i+1},v_j,v_k\} \notin E(\mathcal{H})$.
If none of the two vertices $v_j$ and $v_k$ is in a hyperedge of $\mathcal{H}$ with $v_{i+1}$, then ${\rm deg}^{{\preceq}i+1}_\mathcal{H}(j) = {\rm deg}^{{\preceq}i}_\mathcal{H}(j)$, ${\rm deg}^{{\preceq}i+1}_\mathcal{H}(k) = {\rm deg}^{{\preceq}i}_\mathcal{H}(k)$, and ${\rm deg}^{{\preceq}i+1}_\mathcal{H}(j,k) = {\rm deg}^{{\preceq}i}_\mathcal{H}(j,k)$.
In particular, $\varepsilon^i_j = \varepsilon^{i+1}_j$, $\varepsilon^i_j = \varepsilon^{i+1}_j$, and $\rho^{i+1}_{j,k} = \rho^i_{j,k}$.
Since $U^{i+1}_j=U^i_j$, $U^{i+1}_k=U^i_k$, and $G^{i+1}_{j,k} = G^i_{j,k}$ is bi-$(\varepsilon^i_j,\varepsilon^i_k,\rho^i_{j,k})$-dense between $U^i_j$ and $U^i_k$, we see that $G^{i+1}_{j,k}$ is bi-$(\varepsilon^{i+1}_j,\varepsilon^{i+1}_k,\rho^{i+1}_{j,k})$-dense between $U^{i+1}_j$ and $U^{i+1}_k$, which again verifies condition~\ref{item-embed2}.
It remains to consider the case when exactly one of the vertices $v_j$ and $v_k$ is in a hyperedge of $\mathcal{H}$ with $v_{i+1}$.
By symmetry, we can assume without loss of generality that $v_j$ and $v_{i+1}$ are in a hyperedge of~$\mathcal{H}$.
Then  ${\rm deg}^{{\preceq}i+1}_\mathcal{H}(j) > {\rm deg}^{{\preceq}i}_\mathcal{H}(j)$, ${\rm deg}^{{\preceq}i+1}_\mathcal{H}(k) = {\rm deg}^{{\preceq}i}_\mathcal{H}(k)$, and ${\rm deg}^{{\preceq}i+1}_\mathcal{H}(j,k) = {\rm deg}^{{\preceq}i}_\mathcal{H}(j,k)$.
In particular, $\varepsilon^i_j > \varepsilon^{i+1}_j$, $\varepsilon^i_k = \varepsilon^{i+1}_k$, and $\rho^{i+1}_{j,k} = \rho^i_{j,k}$.
By definition, we have $U^{i+1}_j=U^i_j(f(v_{i+1}))$ and $U^{i+1}_k=U^i_k$.
Let $X$ be a subset of $U^{i+1}_j$ of size at least $\varepsilon^{i+1}_j|U^{i+1}_j|$ and let $Y$ be a subset of $U^{i+1}_k$ of size at least $\varepsilon^{i+1}_k|U^{i+1}_k| = \varepsilon^i_k|U^i_k|$.
We want to show that $d(X,Y) \geq \rho^{i+1}_{j,k}$ in $G^{i+1}_{j,k}$.
The subset $X$ has size at least $\varepsilon^i_j |U^i_j|$, as $|U^{i+1}_j| = |U^i_j(f(v_{i+1}))| \geq \rho^i_{i+1,j}|U^i_j|$ and our choice of $\varepsilon^{i+1}_j$ together with the fact ${\rm deg}^{{\preceq}i}_\mathcal{H}(i+1,j) \leq d$ gives $\varepsilon^{i+1}_j|U^{i+1}_j| \geq \varepsilon^i_j |U^i_j|$.
Thus, since $G^i_{j,k}$ is bi-$(\varepsilon^i_j,\varepsilon^i_k,\rho^i_{j,k})$-dense between $U^i_j \supseteq U^{i+1}_j$ and $U^i_k \supseteq U^{i+1}_k$, the density between $X$ and $Y$ is at least $\rho^i_{j,k}=\rho^{i+1}_{j,k}$ in $G^i_{j,k}$.
Since $G^{i+1}_{j,k}$ is an induced subgraph of $G^i_{j,k}$, the density between $X$ and $Y$ is also at least $\rho^i_{j,k}=\rho^{i+1}_{j,k}$ in $G^{i+1}_{j,k}$. 
Thus $G^{i+1}_{j,k}$ is bi-$(\varepsilon^{i+1}_j,\varepsilon^{i+1}_k,\rho^{i+1}_{j,k})$-dense between $U^{i+1}_j$ and $U^{i+1}_k$, which verifies condition~\ref{item-embed2}.

We show that Condition~\ref{item-embed3} is satisfied as well.
Let $j$ and $k$ be indices such that $i+2<j<k\leq t$.
If $\{v_h,v_j,v_k\}$ is a hyperedge of $\mathcal{H}$ for some $h \leq i+1$, then $\{f(v_h),x,y\}$ is a hyperedge of $\mathcal{G}$ for every edge $\{x,y\}$ of $G^{i+1}_{j,k}$.
This is true for $h \leq i$ by the induction assumption, as $G^{i+1}_{j,k} \subseteq G^i_{j,k}$.
For $h=i+1$ we have $G^{i+1}_{j,k} = H_{j,k}(f(v_{i+1}))$ and the claim holds by the definition of $H_{j,k}(f(v_{i+1}))$.
Similarly, for all $h_1$ and $h_2$ with $h_1 < h_2 \leq i+1$, every vertex $u \in U^{i+1}_j$ with $i+1<j$ forms a hyperedge $\{f(v_{h_1}),f(v_{h_2}),u\}$ of $\mathcal{G}$ if $\{v_{h_1},v_{h_2},v_j\}$ is a hyperedge of $\mathcal{H}$.
This is because if $h_2\leq i$, then the claim follows from the induction assumption and the fact $U^{i+1}_j \subseteq U^i_j$.
For $h_2=i+1$, the triple $\{f(v_{h_1}),f(v_{h_2}),x\}$ is a hyperedge of $\mathcal{G}$ for every $x \in U^{i+1}_j = U^i_j(f(v_{h_2}))$ by the inductive assumption.

Finally, after we find all vertices $f(v_1),\dots,f(v_t)$, condition~\ref{item-embed3} ensures that they determine a copy of $\mathcal{H}$ as an ordered subhypergraph of $\mathcal{G}$.
\end{proof}

We use the following result proved by Conlon, Fox, and Sudakov~\cite{cfs12}.
It says that if a graph $G$ contains many triangles, a 3-uniform hypergraph whose hyperedges form a dense subset of the set of triangles in $G$ contains a large copy of $K^{(3)}_3(n)$.

\begin{lemma}[\cite{cfs12}]
\label{lem-tripartite}
Let $V_1,V_2,V_3$ be pairwise disjoint sets of vertices, each of size at most $m$, and let $G_{i,j}$ be a bipartite graph between $V_i$ and $V_j$ for all $i$ and $j$ with $1 \leq i < j \leq 3$.
Assume there are at least $\delta m^3$ triangles in the tripartite graph formed by $G_{1,2}$, $G_{1,3}$, and $G_{2,3}$.
Let $G$ be a $3$-uniform hypergraph containing at least $(1-\eta)$-proportion of the triangles in the tripartite graph, where $0<\eta < 1/8$.
Then $G$ contains a copy of $K^{(3)}_3(s)$ provided that
\[e^{2^{10}\delta^{-2}s^{3/2}}(1-4\eta)^{-4s^2}\left(\frac{\delta}{16}\right)^{4s} \leq m.\]
\end{lemma}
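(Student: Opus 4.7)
The goal is to produce subsets $S_i\subseteq V_i$ with $|S_i|=s$ for $i=1,2,3$ such that every triple in $S_1\times S_2\times S_3$ is a hyperedge of $G$. My plan is to first pick $S_3$ via dependent random choice from $V_3$ so that many pairs $(v_1,v_2)\in V_1\times V_2$ form hyperedges together with every vertex of $S_3$, and then to extract a complete bipartite graph $K_{s,s}$ (and hence $S_1,S_2$) inside the bipartite graph on $V_1\cup V_2$ recording these ``good'' pairs, via K\H{o}v\'ari--S\'os--Tur\'an.

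For a pair $(v_1,v_2)\in V_1\times V_2$ let $N(v_1,v_2)$ denote the set of $v_3\in V_3$ such that $\{v_1,v_2,v_3\}$ is both a triangle of the tripartite graph and a hyperedge of $G$. The hypotheses give $\sum_{v_1,v_2}|N(v_1,v_2)|\geq(1-\eta)\delta m^3$, so the average of $|N(v_1,v_2)|$ over pairs is at least $(1-\eta)\delta m$. Sample $T\subseteq V_3$ uniformly with $|T|=r$ for some $r$ taken slightly larger than $s$ (heuristically $r\approx 4s$), and set
\[
W(T)=\{(v_1,v_2)\in V_1\times V_2: T\subseteq N(v_1,v_2)\}.
\]
Convexity of $\binom{x}{r}$ gives $\mathbb{E}[|W(T)|]\geq m^2\binom{(1-\eta)\delta m}{r}/\binom{m}{r}$, which is of order $m^2((1-\eta)\delta)^r$. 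Once we fix a $T$ achieving this size, applying K\H{o}v\'ari--S\'os--Tur\'an to $W(T)$ as a bipartite graph on $V_1\cup V_2$ produces a $K_{s,s}$ provided $|W(T)|\geq C_s\, m^{2-1/s}$; we then set $S_1,S_2$ to be the two parts of this $K_{s,s}$ and take $S_3$ to be any $s$-subset of $T$.

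The delicate part is to obtain the stated quantitative bound rather than the much weaker $m\geq \delta^{-O(s^2)}$ produced by the naive execution just described. The main obstacle is the interplay of two parameters: the loss $(1-\eta)$ from the hypergraph density, and the loss $\delta$ from the sparsity of triangles in the tripartite graph. The naive bound overcharges for pairs with small $|N(v_1,v_2)|$ and therefore pays $\delta^{-\Theta(s^2)}$ in the K\H{o}v\'ari--S\'os--Tur\'an step. To make $\delta$ appear only as $\delta^{-O(s)}$, I would use a second-moment (or martingale) concentration inequality on $|W(T)|$ to show that with high probability $|W(T)|$ is close to its mean, so the rare pairs with small $|N|$ can be discarded cheaply; before sampling $T$, one should also restrict to a bipartite ``core'' in $V_1\times V_2$ on which the triangle-degree is comparable to the average, using the triangle-count lower bound $\delta m^3$ to discard only a small fraction of the pairs.

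The exponential factor $e^{2^{10}\delta^{-2}s^{3/2}}$ is precisely of the shape one expects from such a tail bound: deviations of size $s^{3/2}$ scaled by a variance proxy $\delta^{-2}$. The factor $(1-4\eta)^{-4s^2}$ then absorbs the standard convexity loss in the K\H{o}v\'ari--S\'os--Tur\'an step, with the fours arising from the choice $r\approx 4s$; while $(\delta/16)^{4s}$ is what is left of the $\delta$ dependence once the concentration step has eaten the would-be $\delta^{-\Theta(s^2)}$ factor. Balancing the three parameters (size of $T$, size of the core, and KST threshold) so that these three contributions combine into a single clean inequality is the main technical bookkeeping I would expect to carry out.
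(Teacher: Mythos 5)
First, a point of reference: this lemma is not proved in the paper at all --- it is quoted from Conlon, Fox and Sudakov~\cite{cfs12} and used as a black box in the proof of Theorem~\ref{thm-3UnifMaxDegIntChrPrecise} --- so the benchmark is their argument, not anything in this text. Your proposal is an outline rather than a proof: the only part you actually execute is the standard dependent-random-choice computation (sample $T\subseteq V_3$ with $|T|=r\approx 4s$, pass to the graph $W(T)$ of good pairs, apply K\H{o}v\'ari--S\'os--Tur\'an), and, as you concede, this yields a requirement of the form $m\geq ((1-\eta)\delta)^{-\Theta(s^2)}$ rather than the stated one. The difference is not cosmetic. In the stated bound the only factor with exponent $s^2$ is $(1-4\eta)^{-4s^2}$, whose base is an absolute constant (less than $16$ since $\eta<1/8$, and close to $1$ for small $\eta$); all of the $\delta$-dependence is confined to $e^{2^{10}\delta^{-2}s^{3/2}}$ and a $\delta^{-O(s)}$ term. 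This decoupling is exactly what the application here needs: in the proof of Theorem~\ref{thm-3UnifMaxDegIntChrPrecise} one has $\delta=\varepsilon\approx\rho^{15d^2}$ with $\rho=s^{-\Theta(1/d^2)}$, so a bound of the form $\delta^{-\Theta(s^2)}$ would give $2^{\Omega(s^2\log s)}$, worse than the trivial estimate of Corollary~\ref{cor-3UnifBoundedIntChr}, whereas the quoted bound is what produces the subquadratic exponent of Theorem~\ref{thm-3UnifMaxDegIntChr}.

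The repair you sketch --- concentration of $|W(T)|$ together with restricting to a ``core'' of pairs of near-average triangle degree --- does not address the actual bottleneck, because the $\delta^{\Theta(s)}$ density loss sits in the \emph{mean} of $|W(T)|$, not in its fluctuations, and no second-moment or martingale inequality raises a mean. Concretely, take $G_{1,3}$ and $G_{2,3}$ quasirandom of density $\sqrt{\delta}$, $G_{1,2}$ complete, and $\eta=0$: then every pair $(v_1,v_2)$ has triangle-codegree about $\delta m$, there are no rare small-codegree pairs to discard, the core is everything, and $|W(T)|$ concentrates at roughly $\delta^{4s}m^2$, so the K\H{o}v\'ari--S\'os--Tur\'an step genuinely demands $m\geq\delta^{-\Theta(s^2)}$. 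Since this configuration satisfies the hypotheses of the lemma, your mechanism cannot recover the claimed bound, and the diagnosis that the loss comes from ``overcharging'' pairs with small $|N(v_1,v_2)|$ is incorrect. The shape of the bound dictates what a correct proof must do: the step whose cost has exponent $s^2$ must be run at density $1-O(\eta)$, losing only the at most $\eta$-fraction of triangles missing from $G$, while all $\delta$-losses must be paid in a separate step of total cost $e^{O(\delta^{-2}s^{3/2})}\delta^{-O(s)}$. Arranging this is the genuinely different idea in~\cite{cfs12} and it is absent from your proposal; the closing remarks matching each factor of the bound to a step of your sketch are pattern-matching, not derivations. (A small aside: the factor $(\delta/16)^{4s}$ in the statement should be read as $(16/\delta)^{4s}$, which is how it is used in~\eqref{eq-tripartite}.)
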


We note that, although it is not explicitly stated in the above lemma, the copy of $K^{(3)}_3(s)$ has $V_i$ as the $i$th color class.
Thus if the set $V_1 \cup V_2 \cup V_3$ is ordered and $V_1,V_2,V_3$ are consecutive in this ordering, we actually get a copy of the ordered hypergraph $\mathcal{K}^{(3)}_3(s)$.

We now proceed with the proof of Theorem~\ref{thm-3UnifMaxDegIntChrPrecise}, which implies Theorem~\ref{thm-3UnifMaxDegIntChr}.

\begin{proof}[Proof of Theorem~\ref{thm-3UnifMaxDegIntChrPrecise}]
Let $\mathcal{H}$ be an ordered 3-uniform hypergraph on $t$ vertices with maximum degree $d$, let $s$ be a positive integer and $\rho \in (0,1/8)$ be a real number.
We choose $\varepsilon = 2^{-6} \cdot \rho^{15d^2} \cdot d^{-3}$ and we let $N$ be an integer such that 
\[N \geq t \cdot 2^{2^{28}\cdot s^{3/2} \cdot \rho^{-30d^2} \cdot d^6 +12s^2\log{(\frac{1}{1-4\rho})}}.\]
Note that $N \geq t/\varepsilon$.
Consider a red-blue coloring $\chi$ of the hyperedges of~$\mathcal{K}^{(3)}_N$.
We use $\mathcal{G}$ to denote the ordered $3$-uniform hypergraph on $N$ vertices formed by the hyperedges of~$\mathcal{K}^{(3)}_N$ that are blue in~$\chi$.
Similarly, we let $\overline{\mathcal{G}}$ be the hypergraph determined by red hyperedges of~$\mathcal{K}^{(3)}_N$ in~$\chi$.

If $\mathcal{G}$ is tri-$(\varepsilon,\rho,N/t)$-dense, then, since $N \geq t/\varepsilon$, Lemma~\ref{lem-embedding} implies that there is a blue copy of~$\mathcal{H}$ in $\chi$ and we are done.
Thus we assume that $\mathcal{G}$ is not tri-$(\varepsilon,\rho,N/t)$-dense.
That is, there are three consecutive subsets $V_1,V_2,V_3$ of vertices of $\mathcal{G}$, each of size at most $N/t$, and three bipartite graphs $G_{1,2},G_{1,3},G_{2,3}$, each $G_{i,j}$ between $V_i$ and $V_j$, for which there are at least $\varepsilon (N/t)^3$ triangles with one edge in each $G_{i,j}$ and less than $\rho$-proportion of these triangles forms hyperedges in $\mathcal{G}$.
By the choice of $\mathcal{G}$, at least $(1-\rho)$-proportion of these triangles forms hyperedges in $\overline{\mathcal{G}}$.

We show that \begin{equation}
\label{eq-tripartite}
e^{2^{10}\varepsilon^{-2}s^{3/2}}\cdot (1-4\rho)^{-4s^2}\cdot \left(\frac{16}{\varepsilon}\right)^{4s} \leq N/t.
\end{equation}
Then, by Lemma~\ref{lem-tripartite} applied with $\delta = \varepsilon$, $\eta = \rho$, and $m=N/t$, the ordered hypergraph $\overline{\mathcal{G}}$ contains a copy of $K^{(3)}_3(s)$ as an ordered subhypergraph.
By the definition of $\overline{\mathcal{G}}$, all hyperedges of such copy are red in $\chi$, which then finishes the proof.

To estimate~\eqref{eq-tripartite}, 
we estimate each of the three terms in the above expression separately by $N^{1/3}$.
The exponent in the first term in~\eqref{eq-tripartite} is
\[\log{(e)}2^{10}\varepsilon^{-2}s^{3/2} \leq 2^{23}\cdot \rho^{-30d^2}\cdot d^6 \cdot s^{3/2} \leq \frac{\log{(N/t)}}{3}.\]
The second term can be estimated by
\[(1-4\rho)^{-4 s^2} = 2^{-4s^2\log{(1-4\rho)}} \leq (N/t)^{1/3}.\]
Finally, the the third term satisfies
\[\left(\frac{16}{\varepsilon}\right)^{4s} = \left(2^{10}\cdot \rho^{-15d^2} \cdot d^3 \right)^{4s} < 2^{26\cdot s \cdot \rho^{-15d^2} \cdot d^3} \leq (N/t)^{1/3},\]
as $(2^{10}x)^4 < 2^{26x}$ for $x \geq 2$.
Altogether, both inequalities in~\eqref{eq-tripartite} are satisfied.
\end{proof}

\section{Proof of Theorem~\ref{thm-3UnifMaxDegLower}}

Here we prove a superexponential lower bound on $\overline{R}(\mathcal{K}^{(3)}_t,\mathcal{K}^{(3)}_3(n))$ for any $t \geq 4$.
First, we prove the following result, which gives superexponential lower bounds on ordered Ramsey numbers of 3-uniform hypergraphs $\mathcal{K}^{(3)}_{t+1}$ and $\mathcal{K}^{(3)}_3(n)$ provided that we have a superlinear lower bound on the ordered Ramsey number $\overline{R}(\mathcal{K}_t,\mathcal{K}_{m,m})$ in $m$.
The proof is inspired by the approach from~\cite{cfs10}.

\begin{lemma}
\label{lem-3UnifMaxDegLowerPrecise}
For $t \in \mathbb{N}$, if for $m = \lceil n/4\rceil$ we have $2n < \overline{R}(\mathcal{K}^{(3)}_t,\mathcal{K}_{m,m})^\alpha$ for some $\alpha \in (0,1]$,
then, for each sufficiently large $n$,
\[\overline{R}(\mathcal{K}^{(3)}_{t+1},\mathcal{K}^{(3)}_3(n)) \geq \left(\frac{(\overline{R}(\mathcal{K}_t,\mathcal{K}_{m,m})-1)^{1-\alpha}}{e^4}\right)^{(n+2)/6}.\]
\end{lemma}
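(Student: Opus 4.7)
The plan is to give an explicit stepping-up construction in the spirit of Conlon, Fox, and Sudakov~\cite{cfs10}, which is the reference cited by the authors. Concretely, set $R := \overline{R}(\mathcal{K}_t, \mathcal{K}_{m,m}) - 1$; by definition of $R$ there is a $2$-coloring $\chi$ of the edges of the ordered complete graph on $[R]$ with neither a red $\mathcal{K}_t$ nor a blue $\mathcal{K}_{m,m}$. From $\chi$ I will build a $2$-coloring $\widetilde{\chi}$ of the hyperedges of $\mathcal{K}^{(3)}_N$ with $N$ matching the claimed bound, arranged so that red-$\mathcal{K}_t$-freeness of $\chi$ lifts to red-$\mathcal{K}^{(3)}_{t+1}$-freeness of $\widetilde{\chi}$ and blue-$\mathcal{K}_{m,m}$-freeness of $\chi$ lifts to blue-$\mathcal{K}^{(3)}_3(n)$-freeness of $\widetilde{\chi}$.

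The natural attempt is to take the vertex set $V$ to be $S^k$ for a subset $S\subseteq [R]$ of size about $R^{1-\alpha}/e^4$ (obtained greedily or by a probabilistic deletion argument that uses the hypothesis $2n < R^{\alpha}$ to guarantee enough slack) with $k$ of order $(n+2)/6$, endowed with the lexicographic order. For a triple $a < b < c$ in $V$, let $i = \delta(a,b)$ and $j = \delta(b,c)$ denote the first coordinates of disagreement, and define $\widetilde{\chi}(\{a,b,c\})$ to be the $\chi$-color of a pair of coordinate values of $a,b,c$, selected by a short case analysis on whether $i<j$, $i=j$, or $i>j$. Red-clique-freeness then follows by the classical stepping-up argument: if $v_1 < \cdots < v_{t+1}$ were a red $\mathcal{K}^{(3)}_{t+1}$, the sequence $d_s = \delta(v_s, v_{s+1})$ obeys the standard ``no strict monotone triple'' constraints, which, together with the coloring rule, extract a red $\mathcal{K}_t$ from $\chi$ --- a contradiction with the choice of $\chi$.

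The challenging step is the blue side. Assume for contradiction that $A_1 < A_2 < A_3$ are consecutive intervals of $V$ of size $n$ each whose across-triples are all blue. Since the $A_r$ are consecutive in the lex order, there exist coordinates $i^{\star}, j^{\star}$ that split $A_1$ from $A_2$ and $A_2$ from $A_3$ respectively. I would then pigeonhole the vertices of $A_1, A_2$ according to their values at $i^{\star}$ (and of $A_2, A_3$ at $j^{\star}$) to produce subsets $B_1 \subseteq A_1$ and $B_2 \subseteq A_2$ of size $m = \lceil n/4 \rceil$ on which the relevant coordinate values form, via the coloring rule, a blue $\mathcal{K}_{m,m}$ inside $\chi$. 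This contradicts the extremality of $\chi$. The factor $4$ in $m = \lceil n/4 \rceil$ and the exponent $(n+2)/6$ should fall out of balancing the three cases in the coloring rule against the pigeonhole loss from the two split coordinates.

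The anticipated main obstacle is precisely this last argument: a triple of consecutive intervals can straddle many lex ``blocks,'' so one has to identify a single coordinate at which a genuine bipartite blue pattern of size $m$ appears, and the three cases in the coloring rule all have to cooperate. Once that is in place, comparing $|V|$ with the target $\bigl(R^{1-\alpha}/e^4\bigr)^{(n+2)/6}$ and pinning down the explicit constant $e^4$ is routine bookkeeping using the hypothesis $2n<R^{\alpha}$.
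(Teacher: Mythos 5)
Your route is genuinely different from the paper's: the paper does not step up deterministically over a lexicographically ordered power, but composes the extremal graph colouring $\chi_1$ of $\mathcal{K}_R$ (no blue $\mathcal{K}_t$, no red $\mathcal{K}_{m,m}$) with a \emph{uniformly random} colouring $\chi_2$ of the pairs of $[N]$ by $R$ colours, setting $\chi(u,v,w)=\chi_1(\chi_2(u,v),\chi_2(u,w))$ for $u<v<w$, and then kills every red copy of $\mathcal{K}^{(3)}_3(n)$ by a first-moment count over all $\binom{N}{3n}$ vertex choices. As written, your proposal has a genuine gap at exactly the step you flag as the main obstacle. A copy of $\mathcal{K}^{(3)}_3(n)$ in $\mathcal{K}^{(3)}_N$ lives on an \emph{arbitrary} set of $3n$ vertices: the three colour classes are consecutive only relative to one another, not intervals of the host vertex set. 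So you cannot assume $A_1,A_2,A_3$ are lex intervals of $V=S^k$, there is no single pair of coordinates $i^\star,j^\star$ ``splitting'' them, and pairs $(a,b)\in A_1\times A_2$ can have their first coordinate of disagreement at many different positions; the pigeonhole that is supposed to produce an $m\times m$ blue bipartite pattern at one coordinate is never carried out, and it is not routine --- handling arbitrary $3n$-subsets is the whole difficulty, and it is precisely why the paper resorts to a random auxiliary colouring plus a union bound.

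The clique side is also not ``classical.'' Stepping up from uniformity $2$ to uniformity $3$ is known to be delicate: with an alphabet $S$ of size larger than $2$ and $\delta=$ first coordinate of disagreement in lex order, one only has $\delta(a,c)=\min\bigl(\delta(a,b),\delta(b,c)\bigr)$ and one may well have $\delta(a,b)=\delta(b,c)$ for $a<b<c$, so the distinctness/monotonicity properties the standard argument leans on fail; since your colouring rule (the case analysis on $i<j$, $i=j$, $i>j$) is never specified, one cannot verify that a blue $\mathcal{K}^{(3)}_{t+1}$ really forces a blue $\mathcal{K}_t$ in the base colouring. Finally, the hypothesis $2n<R^\alpha$ and the specific quantities $m=\lceil n/4\rceil$, $e^4$, and the exponent $(n+2)/6$ never enter your argument in a checkable way, whereas in the paper they come from a concrete computation: for each vertex $v_i$ of the first class of a putative red copy, the colours $\chi_2(v_i,\cdot)$ towards the two other classes cannot contain $\ell=\lceil n/2\rceil$ fresh colours on each side (else $\chi_1$ would contain a red ordered $K_{\ell,\ell}\supseteq\mathcal{K}_{m,m}$), hence are confined to $n+\ell-1$ colours, and the resulting probability estimate beats $\binom{N}{3n}$ exactly for $N\le\bigl(R^{1-\alpha}/e^4\bigr)^{(n+2)/6}$. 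As it stands, your proposal does not establish the lemma.
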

\begin{proof}
We set $m = \lceil n/4 \rceil$ and $\ell = \lceil n/2 \rceil$.
Let $R$ denote the number $\overline{R}(\mathcal{K}_t,\mathcal{K}_{m,m})-1$ and let $N = (R^{1-\alpha}/e^4)^{(n+2)/6}$.
Using a probabilistic argument, we find a red-blue coloring $\chi$ of the hyperedges of~$\mathcal{K}^{(3)}_N=(K^{(3)}_N,<)$ that does not contain a blue copy of $\mathcal{K}^{(3)}_{t+1}$ and with positive probability does not contain a red copy of~$\mathcal{K}^{(3)}_3(n)$.

We use the following two auxiliary colorings.
Let $\chi_1$ be a red-blue coloring of the edges of~$\mathcal{K}_R$ that does not contain a blue copy of $\mathcal{K}_t$ nor a red copy of $\mathcal{K}_{m,m}$.
Such a coloring exists by the choice of $R$.
Let $\chi_2$ be a coloring of the edges of $\mathcal{K}_N$ with colors $1,\dots,R$, where the color of each edge is chosen uniformly independently at random from~$[R]$.
For three vertices $u < v < w$ of $\mathcal{K}^{(3)}_N$, we then set $\chi(u,v,w) = \chi_1(\chi_2(u,v),\chi_2(u,w))$ if $\chi_2(u,v)\neq \chi_2(u,w)$ and we let $\chi(u,v,w)$ be red otherwise.

Suppose for contradiction that $\chi$ contains a blue copy of $\mathcal{K}^{(3)}_{t+1}$ on some vertices $v_0, \dots, v_t$ of~$\mathcal{K}^{(3)}_N$.
Since all hyperedges of the copy of $\mathcal{K}^{(3)}_{t+1}$ are blue, our choice of $\chi$ gives distinct colors $\chi_2(v_0,v_1),\dots,\chi_2(v_0,v_t)$ and each edge $\{\chi_2(v_0,v_i),\chi_2(v_0,v_j)\}$ such that $\{v_0,v_i,v_j\}$ is a hyperedge of the blue copy of $\mathcal{K}^{(3)}_{t+1}$ is blue in $\chi_1$.
These blue edges determine a blue copy of~$\mathcal{K}_t$ in~$\chi_1$, which contradicts our choice of~$\chi_1$.

We now show that with positive probability there is no red copy of $\mathcal{K}^{(3)}_3(n)$ in $\chi$ by estimating the expected number of such copies.
Consider $3n$ vertices $v_1 < \cdots < v_{3n}$ of~$\mathcal{K}^{(3)}_N$ and suppose that these vertices induce a red copy of $\mathcal{K}^{(3)}_3(n)$ in $\chi$.
We fix $i \in [n]$.
Then there cannot be $2\ell$ distinct colors with $\ell$ of them among the colors $\chi_2(v_i,v_j)$ for $n<j \leq 2n$ and with the remaining $\ell$ of them among $\chi_2(v_i,v_k)$ for $2n < k \leq 3n$. 
Otherwise such colors form a red copy of some ordering of $K_{\ell,\ell}$ in $\chi_1$ and every such ordering contains a copy of $\mathcal{K}_{m,m}$, as $\ell \geq 2m-1$. 
Thus either all the colors $\chi_2(v_i,v_j)$ with $n < j \leq 2n$ are contained in the union of the set $\{\chi_2(v_i,v_k) \colon 2n<k \leq 3n\}$ together with a set of at most $\ell-1$ additional colors or, similarly, all the colors $\chi_2(v_i,v_k)$ with $2n < k \leq 3n$ are contained in the union of the set $\{\chi_2(v_i,v_j) \colon n<j \leq 2n\}$ together with a set of at most $\ell-1$ additional colors.

In the first case, all colors $\chi_2(v_i,v_j)$ with $n<j \leq 2n$ are contained in the union of the set $\{\chi_2(v_i,v_k) \colon 2n<k \leq 3n\}$ together with a set of at most $\ell-1$ additional colors.
In particular, the colors $\chi_2(v_i,v_j)$ with $n<j \leq 2n$ and $\chi_2(v_i,v_k)$ with $2n<k \leq 3n$ are all contained in a set of $n+\ell-1$ colors from $[R]$, as $|\{\chi_2(v_i,v_k) \colon 2n<k \leq 3n\}| \leq n$.
There are $\binom{R}{n+\ell-1}$ sets for the possible colors.
The probability that each of the colors $\chi_2(v_i,v_j)$ with $n<j \leq 2n$ and $\chi_2(v_i,v_k)$ with $2n<k \leq 3n$ is contained in a fixed set of $n+\ell-1$ colors from $[R]$ is $\left(\frac{n+\ell-1}{R}\right)^{2n}$.
The other case is symmetric.
Considering this for every $i \in [n]$, we see that the expected number of red copies of $\mathcal{K}^{(3)}_3(n)$ in~$\chi$ is at most
\begin{align*}
    &\binom{N}{3n}\left(2\binom{R}{n+\ell-1}^n\left(\frac{n+\ell-1}{R}\right)^{2n^2}\right) \\
    &\leq N^{3n}\left(2\left(\frac{eR}{n+\ell-1}\right)^{n(n+\ell-1)}\left(\frac{n+\ell-1}{R}\right)^{2n^2}\right)\\
    &\leq \left(N^3\left(2 e^{n+\ell-1}\left(\frac{n+\ell-1}{R}\right)^{n-\ell+1}\right) \right)^n \\
    &< \left(N^3e^{2n}\left(R^{\alpha-1}\right)^{n-\ell+1}\right)^n\\
    &\leq \left(N^3\left(e^4R^{\alpha-1}\right)^{n/2+1}\right)^n = 1,
\end{align*}
where we used $n+\ell-1\leq 2n \leq R^\alpha$ and our choice of $\ell$ and $N$.
Since the expected number of red copies of $\mathcal{K}^{(3)}_3(n)$ is less than $1$, there is a red-blue coloring $\chi$ that does not contain a blue copy of~$\mathcal{K}^{(3)}_{t+1}$ nor a red copy of~$\mathcal{K}^{(3)}_3(n)$.
\end{proof}

We also use the following bounds on Ramsey numbers of complete graphs and complete bipartite graphs proved by Li and Zang~\cite{liZan03}.

\begin{theorem}[\cite{liZan03}]
\label{thm-ramCliqueBipartite}
For every fixed integer $t \geq 3$ and every integer $m \ge 2$, there is a constant $c>0$ such that
\[R(K_t,K_{m,m}) \geq c\left(\frac{m}{\log{m}}\right)^{(t+1)/2}.\]
\end{theorem}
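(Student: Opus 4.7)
}
The exponent $(t+1)/2$ here exactly matches Spencer's classical bound $R(K_t,K_n) \geq c_t (n/\log n)^{(t+1)/2}$, so the natural plan is to adapt Spencer's probabilistic proof with the ``large independent set'' target $K_n$ replaced by the ``bipartite independent $m$-pair'' target induced by $K_{m,m}$. The model is the binomial random graph $G = G(N,p)$, with the edges of $G$ viewed as blue and its non-edges as red; then a blue $K_t$ is a $t$-clique in $G$ and a red $K_{m,m}$ is a pair $(U,V)$ of disjoint $m$-subsets with no $G$-edge running between them. The target is to take $N \sim c(m/\log m)^{(t+1)/2}$ and to show that with positive probability $G$ contains neither structure.

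The two families of bad events are $A_S$, occurring when a $t$-subset $S$ spans a $K_t$ in $G$ (probability $p^{\binom{t}{2}}$), and $B_{U,V}$, occurring when disjoint $m$-subsets $U,V$ have no $G$-edge between them (probability $(1-p)^{m^2}$). Two bad events are mutually independent unless the corresponding edge sets intersect, which gives dependency-graph degrees of the usual $O(t^2 N^{t-2})$, $O(t^2 N^{2m-2})$, $O(m^2 N^{t-2})$, and $O(m^2 N^{2m-2})$ orders. I would then apply the asymmetric Lov\'asz local lemma with weights $x_A,x_B$ attached to the two families; using $(1-x)^{-1} \leq e^{2x}$ reduces the two LLL inequalities to two scalar conditions in $p,N,m,t$, and choosing $p$ of order $(\log m/m)^{2/(t-1)}$ makes both hold simultaneously at $N \sim (m/\log m)^{(t+1)/2}$.

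The main obstacle is the delicate parameter balancing: the cross-dependency terms (an $A_S$ sharing an edge with a $B_{U,V}$) are by far the most numerous, and $p$ must be chosen very precisely so that neither $p^{\binom{t}{2}} \cdot N^{2m-2}$ nor $(1-p)^{m^2} \cdot N^{t-2}$ blows up. For comparison, a much simpler deletion argument (delete one vertex from each $K_t$ in $G(N, cN^{-2/t})$ and then union-bound over the $(1-p)^{m^2}$-probability bipartite events) yields only the strictly weaker exponent $t/2$; the extra $+1/2$ in the exponent to $(t+1)/2$ is precisely the Spencer-style gain of the asymmetric LLL over the first-moment / deletion approach. Verifying the two LLL inequalities at the claimed parameters is standard, but it does require careful bookkeeping of the four types of dependencies and of the correct scaling of the weights $x_A, x_B$ with respect to the event probabilities.
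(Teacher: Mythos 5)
The paper does not actually prove Theorem~\ref{thm-ramCliqueBipartite}: it is quoted from Li and Zang~\cite{liZan03} and used as a black box, so the only comparison available is with the cited source, not with an in-paper argument. Your overall plan is the right one and does work: in $G(N,p)$ the event ``no edge between two disjoint $m$-sets'' has probability $(1-p)^{m^2}\le e^{-pm^2}$ while there are at most $N^{2m}$ such events, so it plays exactly the role of an independent $n$-set in Spencer's proof with $n\binom{n}{2}$-type quantities replaced by $2m$ and $m^2$; the asymmetric local lemma with the four dependency degrees you list then reproduces the exponent $(t+1)/2$ with $m$ in place of $n$.

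The one quantitative claim in your plan, however, is wrong: choosing $p\asymp(\log m/m)^{2/(t-1)}$ does \emph{not} make both inequalities hold at $N\asymp(m/\log m)^{(t+1)/2}$ once $t\ge 4$ (it is correct only at $t=3$, where $2/(t-1)=1$). In any weighting, the clique inequality forces $x_A\ge p^{\binom{t}{2}}$, and the bipartite-hole inequality, after taking logarithms and using $\ln(1/y_B)=\Theta(m\ln N)$ (forced, since $y_B$ must be small enough that $(1-y_B)^{O(N^{2m-2})}$ stays bounded below), requires $pm^2\gtrsim m\ln N + x_A\cdot m^2N^{t-2}$. Hence both $p\gtrsim(\ln N)/m$ and $p\gtrsim p^{\binom{t}{2}}N^{t-2}$, and the latter gives $N\lesssim p^{-(t+1)/2}$ because $\binom{t}{2}-1=(t-2)(t+1)/2$. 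So reaching $N=(m/\log m)^{(t+1)/2}$ pins $p$ to $\Theta(\log m/m)$ with essentially no slack; with your larger $p=(\log m/m)^{2/(t-1)}$ the clique condition caps $N$ at about $(m/\log m)^{(t+1)/(t-1)}$, far short of the claimed bound, so the deferred ``standard verification'' would fail. The fix is simply Spencer's own choice transplanted here: take $p=C(\ln N)/m$ for a suitable constant $C$ (say $C=5$), $x_A\asymp(\ln N)/(mN^{t-2})$ and $y_B\asymp N^{-2m}$; then $x_A d_{AA}=o(1)$, $y_B d_{AB}=o(1)$, $x_A d_{BA}=O(m\ln N)$, $y_B d_{BB}=O(1)$, and both local lemma inequalities hold as long as $N\le c_t(m/\ln N)^{(t+1)/2}$, which yields $R(K_t,K_{m,m})\ge c(m/\log m)^{(t+1)/2}$ as required.
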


To finish the proof, we now combine all the auxiliary results.

\begin{proof}[Proof of Theorem~\ref{thm-3UnifMaxDegLower}]
It suffices to consider $\mathcal{K}^{(3)}_4$, as ordered Ramsey numbers do not decrease by adding vertices. 
Let $n$ be a sufficiently large integer and let $m = \lceil n/4 \rceil$.
By Theorem~\ref{thm-ramCliqueBipartite}, we have $\overline{R}(\mathcal{K}_3,\mathcal{K}_{m,m}) \geq \Omega\left(\left(\frac{m}{\log{m}}\right)^2\right)$.
In particular, $2n < \overline{R}(\mathcal{K}_3,\mathcal{K}_{m,m})^\alpha$ for some fixed $\alpha < 1$ and a sufficiently large $n$.
Lemma~\ref{lem-3UnifMaxDegLowerPrecise} thus implies that 
\[\overline{R}(\mathcal{K}^{(3)}_4,\mathcal{K}^{(3)}_3(n)) \geq \left(\frac{m}{\log{m}}\right)^{\Omega(n)} \geq 2^{\Omega(n\log{n})},\]
which finishes the proof.
\end{proof}

\section{Proof of Proposition~\ref{prop-generalBound}}

For integers $\chi$ and $k$ with $\chi \geq k \geq 2$ and an ordered $k$-uniform hypergraph $\mathcal{H}$ on $n$ vertices with interval chromatic number $\chi$, we show that there is a constant $c$ such that
\[\overline{R}(\mathcal{H}) \leq 2^{R^{\chi(\chi-1)}  (c\chi  n)^{\chi-1}},\]
where $R=R(K^{(k)}_\chi)$.

Let $N=2^{ R^{\chi(\chi-1)}  (4\chi  n)^{\chi-1}}$ and consider a red-blue coloring of the hyperedges of $\mathcal{K}^{(k)}_N$ on $[N]$.
Since the interval chromatic number of $\mathcal{H}$ is $\chi$, we have $\mathcal{H} \subseteq \mathcal{K}^{(k)}_\chi(n)$ and thus it suffices to show that there is a monochromatic copy of $\mathcal{K}^{(k)}_\chi(n)$ in our coloring.

Since $R=R(K^{(k)}_\chi)$, every ordered subhypergraph of $\mathcal{K}^{(k)}_N$ induced by an $R$-tuple of vertices contains a monochromatic copy of $\mathcal{K}^{(k)}_\chi$ in our coloring.
Each such copy is contained in $\binom{N-\chi}{R-\chi}$ $R$-tuples of vertices of $\mathcal{K}^{(k)}_N$ and thus there are at least
\[\frac{\binom{N}{R}}{\binom{N-\chi}{R-\chi}} = \binom{R}{\chi}^{-1}\binom{N}{\chi}\]
monochromatic copies of $\mathcal{K}^{(k)}_\chi$.
Without loss of generality, we can assume that at least half of these copies has all hyperedges red.

Let $\mathcal{G}$ be the ordered $\chi$-uniform hypergraph with the vertex set $[N]$, where a $\chi$-tuple of vertices forms a hyperedge if and only if it induces a red copy of $\mathcal{K}^{(k)}_\chi$ in our coloring.
We know that $\mathcal{G}$ has at least $\frac{1}{2}\binom{R}{\chi}^{-1}\binom{N}{\chi} \geq \varepsilon \frac{N^\chi}{\chi!}$ hyperedges, where $\varepsilon = \frac{1}{4}\binom{R}{\chi}^{-1}$.
It is well-known (see~\cite{erdos64,niki09}, for example) that every $\chi$-uniform hypergraph on $N$ vertices with at least $\varepsilon \frac{N^\chi}{\chi!}$ hyperedges, where $(\ln{N})^{-1/(\chi-1)} \leq \varepsilon \leq \chi^{-3}$, contains a copy of $K^{(\chi)}_\chi(\lfloor \varepsilon(\ln{N})^{1/(\chi-1)} \rfloor)$.
Since \[\lfloor\varepsilon(\ln{N})^{1/(\chi-1)} \rfloor = \left\lfloor\frac{4R^\chi \chi n}{4\binom{R}{\chi}}\right\rfloor \geq \chi n,\]
we have a copy of $K^{(\chi)}_\chi(\chi n)$ in $\mathcal{G}$ in some ordering.
Note that every ordering of $K^{(\chi)}_\chi(\chi n)$ contains a copy of $\mathcal{K}^{(\chi)}_\chi(n)$ and thus $\mathcal{G}$ also contains a copy of $\mathcal{K}^{(\chi)}_\chi(n)$.
By the definition of $\mathcal{G}$, the vertices of this copy induce a red copy of $\mathcal{K}^{(k)}_\chi(n)$ in the coloring of~$\mathcal{K}^{(k)}_N$, which finishes the proof.
\qed 

\section{Open problems}
\label{sec-openProblems}

There is a plenty of open questions about ordered Ramsey numbers for ordered hypergraphs, as this area is still vastly unexplored.
Here we offer some of the open problems that we considered during our study.

We proved estimates on the ordered Ramsey numbers of ordered $3$-uniform hypergraphs with bounded maximum degree and with interval chromatic number $3$.
However, our bounds are not tight.
Recall that it follows from Theorem~\ref{thm-3UnifMaxDegIntChr} and Theorem~\ref{thm-3UnifMaxDegLower} that there are positive constants $c_1$, $c_2$, and $\varepsilon$, all depending on $d$, such that every ordered $3$-uniform hypergraph $\mathcal{H}$ on $n$ vertices with maximum degree $d$ satisfies
\[\overline{R}(\mathcal{H},\mathcal{K}^{(3)}_3(n)) \leq 2^{c_1n^{2-\varepsilon}},\]
while there is a fixed $\mathcal{G}$ such that 
\[\overline{R}(\mathcal{G},\mathcal{K}^{(3)}_3(n)) \geq 2^{c_2n\log{n}}.\]
Thus although the exponents in the bounds are reasonably close in this non-diagonal case, there is still a gap between them and it would be interesting to close it.

\begin{problem}
Let $d$ be a fixed positive integer and let $\mathcal{H}$ be an ordered $3$-uniform hypergraph on $n$ vertices with maximum degree $d$.
Close the gap between the lower and upper bounds on $\overline{R}(\mathcal{H},\mathcal{K}^{(3)}_3(n))$.
\end{problem}

Another interesting problem is to extend the upper bound from Corollary~\ref{cor-3UnifMaxDegIntChr} to ordered $3$-uniform hypergraphs with bounded maximum degree and fixed interval chromatic number that is larger than $3$.
We recall that their ordered Ramsey numbers are bounded from above by $2^{O(n^2)}$ by Corollary~\ref{cor-3UnifBoundedIntChr}.
In particular, one can ask if the exponent is still subquadratic in the number of vertices.

\begin{problem}
Let $d$ and $\chi$ be fixed positive integers.
Is there an $\varepsilon = \varepsilon(d,\chi)>0$ such that, for every ordered $3$-uniform hypergraph $\mathcal{H}$ on $n$ vertices with maximum degree $d$ and with interval chromatic number $\chi$, we have
\[\overline{R}(\mathcal{H}) \leq 2^{O(n^{2-\varepsilon})}?\]
\end{problem}

As we discussed in Subsection~\ref{subsec-previousResults}, the monotone hyperpaths are examples of ordered $3$-uniform hypergraphs with maximum degree $3$ and with an arbitrarily large interval chromatic number such that their ordered Ramsey numbers grow exponentially.
To our knowledge, this is the best lower bound on ordered Ramsey numbers of ordered $3$-uniform hypergraphs with bounded maximum degrees.
Can we obtain better lower bounds?

\begin{problem}
For a fixed $d$, is there an example of a family $\{\mathcal{H}_n\}$ of ordered $3$-uniform hypergraphs $\mathcal{H}_n$ on $n$ vertices with maximum degree $d$ such that the numbers $\overline{R}(\mathcal{H}_n)$ grow superexponentially in $n$?
\end{problem}

We note that even for $\mathcal{K}_n^{(3)}$ the best known lower bound on $\overline{R}(\mathcal{K}_n^{(3)})$ is of order $2^{\Omega(n^2)}$; see~\eqref{eq-hypergraphRamsey}.

The growth rate of Ramsey numbers of unordered $3$-uniform hypergraphs with bounded maximum degree is only linear in the number of vertices; see~\cite{cfs09}.
The situation is completely different for ordered $3$-uniform hypergraphs with bounded maximum degree, as we can, for example, see from the bounds for monotone hyperpaths.
In general, we are not aware of any nontrivial upper bounds on these ordered Ramsey numbers. 

\begin{problem}
What is the upper bound on ordered Ramsey numbers of ordered $3$-uniform hypergraphs with bounded maximum degree?
\end{problem}

Finally, we proved all our upper bounds under the assumption that the maximum degree is bounded.
However, the corresponding bounds for ordered graphs such as~\eqref{eq-degreeIntChrNumber} hold for bounded degeneracy, which is a less restrictive assumption.
Thus one can also try to strengthen our results, in particular Theorem~\ref{thm-3UnifMaxDegIntChr} and Corollary~\ref{cor-3UnifMaxDegIntChr}, to ordered hypergraphs with bounded degeneracy instead of the maximum degree.

\bibliography{bibliography}	
\bibliographystyle{plain}

\end{document}